\newtheorem{prop}{Proposition}
\newtheorem{lemma}[prop]{Lemma}
\newtheorem{theorem}[prop]{Theorem}
\newtheorem{corollary}[prop]{Corollary}
\theoremstyle{definition}
\newtheorem{definition}[prop]{Definition}
\newtheorem{remark}[prop]{Remark}
\newtheorem{example}[prop]{Example}
\newtheorem*{conclusion*}{Conclusion}
\newcommand{\R}{\mathbb{R}}
\newcommand{\W}{\mathcal{W}}
\newcommand{\C}{\mathcal{C}}
\newcommand{\seqnum}[1]{\href{https://oeis.org/#1}{\rm \underline{#1}}}
\newcommand{\rc}{\textnormal{rc}}
\begin{document}
\tikzset{mystyle/.style={matrix of nodes,
        nodes in empty cells,
        row 1/.style={nodes={draw=none}},
        row sep=-\pgflinewidth,
        column sep=-\pgflinewidth,
        nodes={draw,minimum width=1cm,minimum height=1cm,anchor=center}}}
\tikzset{mystyleb/.style={matrix of nodes,
        nodes in empty cells,
        row sep=-\pgflinewidth,
        column sep=-\pgflinewidth,
        nodes={draw,minimum width=1cm,minimum height=1cm,anchor=center}}}

\title{Counting $r\times s$ rectangles in (Catalan) words}

\author[SELA FRIED]{Sela Fried$^{\dagger}$}
\thanks{$^{\dagger}$ Department of Computer Science, Israel Academic College,
52275 Ramat Gan, Israel.
\\
\href{mailto:friedsela@gmail.com}{\tt friedsela@gmail.com}}
\author[TOUFIK MANSOUR]{Toufik Mansour$^{\sharp}$}
\thanks{$^{\sharp}$ Department of Mathematics, University of Haifa, 3103301 Haifa, Israel.\\
\href{mailto:tmansour@univ.haifa.ac.il}{\tt tmansour@univ.haifa.ac.il}}

\begin{abstract}
Generalizing previous results, we introduce and study a new statistic on  words, that we call rectangle capacity. For two fixed positive integers $r$ and $s$, this statistic counts the number of occurrences of a rectangle of size $r\times s$ in the bargraph representation of a word. We find the bivariate generating function for the distribution on words of the number of $r\times s$ rectangles and the generating function for their total number over all words. We also obtain the analog results for Catalan words.
\bigskip

\noindent \textbf{Keywords:} Bargraph, Catalan word, Chebyshev polynomial, generating function, word. 

\noindent
\textbf{Math.~Subj.~Class.:} 05A05, 05A15.
\end{abstract}

\maketitle

\section{Introduction}
Let $n$ and $k$ be two positive integers and set $[k] = \{1,2,\ldots,k\}$. A word over the alphabet $[k]$ of length $n$ is any element of the set $[k]^n$. Words have a visual representation in terms of bargraphs (see Figure \ref{fig;1}), giving rise to many natural statistics on words, such as their water capacity, number of lit cells, and  perimeter (e.g., \cite{BBK}, \cite{ABBKM}, and \cite{BBKM}. See \cite{MS2} for a comprehensive review of works studying enumeration problems in bargraphs).

Generalizing previous results, we introduce and study a new statistic, that we call {\it rectangle capacity}. For two fixed positive integers $r$ and $s$, this statistic counts the number of occurrences of a rectangle of size $r\times s$ in the bargraph representation of a word (see Figure \ref{fig;2}). Actually, rectangle capacity is a special case of a much more general statistic, defined in \cite{MS1}: For two bargraphs $B$ and $C$, a vertex $(x,y)$ of $B$ is said to be a $C$-vertex if $C$ lies entirely in $B$, when positioned starting at $(x,y)$. The authors of \cite{MS1} studied the number of $C$-vertices in several special cases. Two of which correspond to our statistic of rectangle capacity with $1\times s$ and $r\times 1$. Rectangle capacity has also been studied in the context of set partitions by \cite{C}, for $r\times s= 1\times 2$ and by \cite{AB} for $r\times s= 2\times 2$. Notice that the rectangle capacity of $1\times 1$ rectangles coincides with the area of the bargraph, a statistic that has been studied in several cases as well (e.g., \cite{6, MS2}). 

\begin{figure}[H]
\begin{center}
\centering
\scalebox{.40}{
\begin{tikzpicture}
\foreach \y in {1,2,3}
{\node[line width=1mm] at (0,\y) [rectangle,draw=black!35, minimum size=1cm]  {};}

\foreach \y in {1,2,3,4}
{\node[line width=1mm] at (1,\y) [rectangle,draw=black!35, minimum size=1cm]  {};}

\foreach \y in {1,2,3,4,5}
{\node[line width=1mm] at (2,\y) [rectangle,draw=black!35, minimum size=1cm]  {};}

\foreach \y in {1}
{\node[line width=1mm] at (3,\y) [rectangle,draw=black!35, minimum size=1cm]  {};}

\foreach \y in {1,2,3}
{\node[line width=1mm] at (4,\y) [rectangle,draw=black!35, minimum size=1cm]  {};}

\foreach \y in {1,2,3,4}
{\node[line width=1mm] at (5,\y) [rectangle,draw=black!35, minimum size=1cm]  {};}
\end{tikzpicture}}
\end{center}
\caption{The bargraph representation of the word $345134$}
\label{fig;1}
\end{figure}
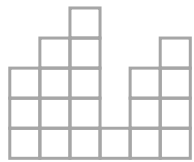%

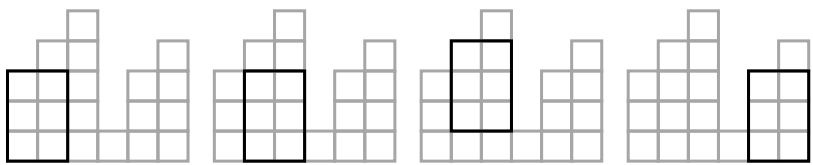
\begin{figure}[H]
\centering
\scalebox{.40}{
\begin{tikzpicture}
\foreach \y in {1,2,3}
{\node[line width=1mm] at (0,\y) [rectangle,draw=black!35, minimum size=1cm]  {};}

\foreach \y in {1,2,3,4}
{\node[line width=1mm] at (1,\y) [rectangle,draw=black!35, minimum size=1cm]  {};}

\foreach \y in {1,2,3,4,5}
{\node[line width=1mm] at (2,\y) [rectangle,draw=black!35, minimum size=1cm]  {};}

\foreach \y in {1}
{\node[line width=1mm] at (3,\y) [rectangle,draw=black!35, minimum size=1cm]  {};}

\foreach \y in {1,2,3}
{\node[line width=1mm] at (4,\y) [rectangle,draw=black!35, minimum size=1cm]  {};}

\foreach \y in {1,2,3,4}
{\node[line width=1mm] at (5,\y) [rectangle,draw=black!35, minimum size=1cm]  {};}
\node[line width=1mm] at (0.5,2) [rectangle,draw=black!35, minimum width=2cm, minimum height = 3cm, color=black]  {};
\end{tikzpicture}
\hspace{0.5cm}
\begin{tikzpicture}
\foreach \y in {1,2,3}
{\node[line width=1mm] at (0,\y) [rectangle,draw=black!35, minimum size=1cm]  {};}

\foreach \y in {1,2,3,4}
{\node[line width=1mm] at (1,\y) [rectangle,draw=black!35, minimum size=1cm]  {};}

\foreach \y in {1,2,3,4,5}
{\node[line width=1mm] at (2,\y) [rectangle,draw=black!35, minimum size=1cm]  {};}

\foreach \y in {1}
{\node[line width=1mm] at (3,\y) [rectangle,draw=black!35, minimum size=1cm]  {};}

\foreach \y in {1,2,3}
{\node[line width=1mm] at (4,\y) [rectangle,draw=black!35, minimum size=1cm]  {};}

\foreach \y in {1,2,3,4}
{\node[line width=1mm] at (5,\y) [rectangle,draw=black!35, minimum size=1cm]  {};}
\node[line width=1mm] at (1.5,2) [rectangle,draw=black!35, minimum width=2cm, minimum height = 3cm, color=black]  {};
\end{tikzpicture}
\hspace{0.5cm}
\begin{tikzpicture}
\foreach \y in {1,2,3}
{\node[line width=1mm] at (0,\y) [rectangle,draw=black!35, minimum size=1cm]  {};}

\foreach \y in {1,2,3,4}
{\node[line width=1mm] at (1,\y) [rectangle,draw=black!35, minimum size=1cm]  {};}

\foreach \y in {1,2,3,4,5}
{\node[line width=1mm] at (2,\y) [rectangle,draw=black!35, minimum size=1cm]  {};}

\foreach \y in {1}
{\node[line width=1mm] at (3,\y) [rectangle,draw=black!35, minimum size=1cm]  {};}

\foreach \y in {1,2,3}
{\node[line width=1mm] at (4,\y) [rectangle,draw=black!35, minimum size=1cm]  {};}

\foreach \y in {1,2,3,4}
{\node[line width=1mm] at (5,\y) [rectangle,draw=black!35, minimum size=1cm]  {};}
\node[line width=1mm] at (1.5,3) [rectangle,draw=black!35, minimum width=2cm, minimum height = 3cm, color=black]  {};
\end{tikzpicture}
\hspace{0.5cm}
\begin{tikzpicture}
\foreach \y in {1,2,3}
{\node[line width=1mm] at (0,\y) [rectangle,draw=black!35, minimum size=1cm]  {};}

\foreach \y in {1,2,3,4}
{\node[line width=1mm] at (1,\y) [rectangle,draw=black!35, minimum size=1cm]  {};}

\foreach \y in {1,2,3,4,5}
{\node[line width=1mm] at (2,\y) [rectangle,draw=black!35, minimum size=1cm]  {};}

\foreach \y in {1}
{\node[line width=1mm] at (3,\y) [rectangle,draw=black!35, minimum size=1cm]  {};}

\foreach \y in {1,2,3}
{\node[line width=1mm] at (4,\y) [rectangle,draw=black!35, minimum size=1cm]  {};}

\foreach \y in {1,2,3,4}
{\node[line width=1mm] at (5,\y) [rectangle,draw=black!35, minimum size=1cm]  {};}
\node[line width=1mm] at (4.5,2) [rectangle,draw=black!35, minimum width=2cm, minimum height = 3cm, color=black]  {};
\end{tikzpicture}}
\caption{There are four occurrences of a rectangle of size $3\times 2$ in the word $345134$.}\label{fig;2}
\end{figure}

Catalan words of length $n$ are words $w_{1}\cdots w_{n}$ satisfying $w_{1}=1$ and $w_{i+1}\leq w_{i}+1$, for every $1\leq i\leq n$. Being counted by the Catalan numbers, Catalan words are well studied and still continue to attract attention from the combinatorics community (e.g., \cite{BAR1, BAR3, BAR2, SHa}). Nevertheless, research on the distribution of statistics on Catalan words in terms of their representation as bargraphs, seems to have begun only recently with \cite{6, 9, 10}. 

In this work we find the bivariate generating function for the distribution on words (resp.\ Catalan words) of the number of $r\times s$ rectangles and the generating function for their total number over all words (resp.\ Catalan words).

Before we begin, let $r,s$, and $k$ be three positive integers and let $n$ be a nonnegative integer, to be used throughout this work. If $m$ is a positive integer, we denote by $[m]$ the set $\{1,2,\ldots,m\}$. 

\section{Main results - Words}

Let $\W_{n,k} = [k]^n$ stand for the set of all words over $[k]$ of length $n$ and denote by $a_{n,k}=a_{n,k}(t)$ the distribution on $\W_{n,k}$ of the number of $r\times s$ rectangles. Let $A_k(x,t)$ denote the generating function of the numbers $a_{n,k}$. We distinguish between two cases, namely $r=1$ and $r\geq 2$. As was already mentioned in the introduction, the case $1\times s$ has already been settled by \cite[Theorem 2.2]{MS1}. For completeness, we state, without proof, the result also in this case. Notice that our formulation is somewhat simpler, since we do not consider the dependency on the number of cells of the bargraphs.

\subsection{\texorpdfstring{$1\times s$}{} rectangles}

\begin{theorem}
We have
\[
A_1(x,t) =
\frac{1-\sum_{n=1}^{s-1}(t-1)x^{n}}{1-tx}\]
and, for $k\geq 2$,
\[A_{k}(x,t)=\alpha_{k}(x,t)+\frac{1/t^{s}x}
{1-tx\alpha_{k-1}(tx,t)-\frac{1/t^{s}}
{1-t^{2}x\alpha_{k-2}(t^{2}x,t)-\frac{1/t^{s}}
{\ddots1-t^{k-2}x\alpha_2(t^{k-2}x,t)-\frac{\overset{\vdots}
{1/t^{s}}}{1-t^{k-1}xA_1(t^{k-1}x,t)}
\iddots}}},\]
where
\[\alpha_{k}(x,t)=\frac{1-(kx)^{s}}{1-kx}-\frac{1-(ktx)^{s}}{t^{s-1}(1-ktx)}-\frac{1}{t^{s}x}.\]
\end{theorem}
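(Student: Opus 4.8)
The plan is to set up a recursion in the alphabet size $k$ by peeling off the ``ground floor'' of the bargraph, and then to unfold that recursion into the displayed continued fraction.

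\smallskip
\noindent\textbf{A decomposition and the behaviour of $\rc_{1\times s}$.} Given $w\in[k]^n$, look at its columns of height $1$. The remaining columns split into maximal runs of consecutive columns of height $\ge 2$, and subtracting $1$ from the entries of such a run produces a nonempty word over $[k-1]$. This gives a bijection between $[k]^n$ and the data
\[
w=1^{a_0}\,(v^{(1)}+1)\,1^{a_1}\,(v^{(2)}+1)\,1^{a_2}\cdots(v^{(j)}+1)\,1^{a_j},
\]
where $j\ge 0$, $a_0,a_j\ge 0$, $a_1,\dots,a_{j-1}\ge 1$, and $v^{(1)},\dots,v^{(j)}$ are nonempty words over $[k-1]$. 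A $1\times s$ rectangle at height $1$ only needs $s$ consecutive columns, so there are exactly $\max(n-s+1,0)$ of them, while a $1\times s$ rectangle at height $\ge 2$ must lie inside a single block $v^{(i)}+1$ and is therefore accounted for by $\rc_{1\times s}(v^{(i)})$. Hence
\[
\rc_{1\times s}(w)=\max(n-s+1,0)+\sum_{i=1}^{j}\rc_{1\times s}\bigl(v^{(i)}\bigr).
\]
For $k=1$ this forces $w=1^n$, so $A_1(x,t)=\sum_{n\ge 0}x^n t^{\max(n-s+1,0)}=\frac{1-x^{s}}{1-x}+\frac{tx^{s}}{1-tx}$, and one checks this equals the stated expression.

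\smallskip
\noindent\textbf{The recursion for $k\ge 2$.} Summing $x^{n}t^{\rc_{1\times s}(w)}$ over all configurations, each separator $1^{a_i}$ contributes a geometric factor ($\tfrac1{1-tx}$ at the two ends and $\tfrac{tx}{1-tx}$ for the $j-1$ interior ones) and each block $v^{(i)}+1$ contributes $A_{k-1}(tx,t)-1$, the substitution $x\mapsto tx$ recording the extra unit of height added by the lifting. The one subtlety is that $\max(n-s+1,0)$ equals $0$, not $n-s+1$, when $n\le s-1$. So one first computes the auxiliary sum in which every word of length $n$ is given weight $t^{\,n-s+1}$ (this is where the Laurent factor $1/(t^{s}x)$ enters) — summing the geometric series in $j$ collapses it to $\dfrac{t^{1-s}A_{k-1}(tx,t)}{1-tx\,A_{k-1}(tx,t)}$ — and then corrects the finitely many lengths $n\le s-1$, for which $\rc_{1\times s}\equiv 0$ and the word count is $k^{n}$. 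The resulting correction $\sum_{n=0}^{s-1}k^{n}x^{n}(1-t^{\,n-s+1})$ is exactly $\alpha_k(x,t)+\tfrac1{t^{s}x}$, and the elementary identity $\tfrac1{t^{s}x}+\tfrac{t^{1-s}A_{k-1}(tx,t)}{1-tx\,A_{k-1}(tx,t)}=\tfrac{1/(t^{s}x)}{1-tx\,A_{k-1}(tx,t)}$ gives
\[
A_k(x,t)=\alpha_k(x,t)+\frac{1/(t^{s}x)}{1-tx\,A_{k-1}(tx,t)}\qquad(k\ge 2).
\]

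\smallskip
\noindent\textbf{Unfolding, and the main obstacle.} Applying this recursion to $A_{k-1}(tx,t)$ gives $tx\,A_{k-1}(tx,t)=tx\,\alpha_{k-1}(tx,t)+\dfrac{1/t^{s}}{1-t^{2}x\,A_{k-2}(t^{2}x,t)}$; iterating for $A_{k-2}(t^{2}x,t),\dots,A_{2}(t^{k-2}x,t)$ and terminating with the explicit closed form for $A_{1}(t^{k-1}x,t)$ reproduces precisely the displayed continued fraction, $\alpha_{k-j}$ appearing there with argument $t^{j}x$. The substantive work is the middle step: pinning down the ground-floor bijection and the additivity of $\rc_{1\times s}$ under lifting, and then handling the non-polynomial boundary term $\max(n-s+1,0)$ without an off-by-one error. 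This is also what explains the unusual shape of the answer — $\alpha_k(x,t)$ and the summand $1/(t^{s}x)$ are genuine Laurent series in $x$, and only their combination in the formula for $A_k$ is a power series.
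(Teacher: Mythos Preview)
Your argument is correct. The paper does not prove this theorem at all: it explicitly states the result ``without proof'' and attributes it to \cite[Theorem 2.2]{MS1}, so there is no proof in the paper to compare against.

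What you do is the natural construction underlying such continued-fraction expressions: you strip off the bottom row of the bargraph, decompose a word in $[k]^n$ according to the positions of its $1$'s into alternating blocks of $1$'s and lifted nonempty words over $[k-1]$, and track how $\rc_{1\times s}$ behaves under this decomposition. Your key observations are all verified easily: (i) the ground-floor count is $\max(n-s+1,0)$ because every column has height $\ge 1$; (ii) any $1\times s$ rectangle at height $\ge 2$ must sit inside a single lifted block, yielding the additivity $\rc_{1\times s}(w)=\max(n-s+1,0)+\sum_i\rc_{1\times s}(v^{(i)})$; (iii) the auxiliary sum with weight $t^{n-s+1}$ instead of $t^{\max(n-s+1,0)}$ factors as a geometric series to $\dfrac{t^{1-s}A_{k-1}(tx,t)}{1-txA_{k-1}(tx,t)}$; (iv) the finite correction $\sum_{n=0}^{s-1}k^nx^n(1-t^{\,n-s+1})$ is exactly $\alpha_k(x,t)+\tfrac{1}{t^sx}$; and (v) the algebraic identity combining this $\tfrac{1}{t^sx}$ with the auxiliary sum gives the clean recursion
\[
A_k(x,t)=\alpha_k(x,t)+\frac{1/(t^sx)}{1-tx\,A_{k-1}(tx,t)}\qquad(k\ge 2),
\]
which unfolds to the stated continued fraction. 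Your verification of the $k=1$ base case is also correct. This is presumably the same mechanism as in \cite{MS1}, and your write-up would serve as a self-contained proof filling the gap the paper leaves.
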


\begin{remark}
The total number of $1\times s$ words over all words is established in Theorem \ref{eq;hs1}. See also Table \ref{tablea1}.
\end{remark}

\subsection{\texorpdfstring{$r\times s$}{} rectangles, where \texorpdfstring{$r\geq 2$}{}}

For $m\in [r]$, we denote by $\W_{n,k}^{\geq m}$ the subset of $\W_{n,k}$ consisting of those words $w=w_1\cdots w_n$ such that $w_i\geq m$, for every $i\in [n]$. Let $a^{\geq m}_{n,k}$ denote the restriction of $a_{n,k}$ to $\W_{n,k}^{\geq m}$ and let $A_k^{\geq m}(x,t)$ denote the generating function of the numbers $a^{\geq m}_{n,k}$.

\begin{lemma}
Let $k\geq r-1$. Then
\begin{equation}\label{eq;842}
A_k^{\geq r-1}(x,t) =
\begin{cases}
 \frac{1}{1-x}&\textnormal{if } k = r-1 \\
 \frac{\alpha(x,t)A_{k-1}^{\geq r-1}(tx,t) +\beta_k(x,t)}{\gamma(x,t)A_{k-1}^{\geq r-1}(tx,t) +\delta_k(x,t)}&  \textnormal{otherwise},
\end{cases}
\end{equation} where $\alpha(x,t) = 1/t^{s-1}, \gamma(x,t) = -x/t^{s-1}$, and
\begin{align}
\beta_k(x,t)&=\frac{1-((k-r+2)x)^{s}}{1-(k-r+2)x}-\frac{1-((k-r+1)tx)^{s}}{t^{s-1}(1-(k-r+1)tx)}\nonumber\\&-\frac{x}{1-(k-r+2)x}\left(\frac{1-((k-r+1)x)^{s-1}}{1-(k-r+1)x}-((k-r+2)x)^{s-1}\frac{1-\left(\frac{k-r+1}{k-r+2}\right)^{s-1}}{1-\frac{k-r+1}{k-r+2}}\right),\nonumber\\
\delta_k(x,t) &=1-\frac{x(1-((k-r+1)x)^{s-1})}{1-(k-r+1)x}+\frac{x(1-((k-r+1)tx)^{s-1})}{t^{s-1}(1-(k-r+1)tx)}.\nonumber
\end{align}
\end{lemma}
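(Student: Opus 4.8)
The plan is to set up a recursion on the alphabet size $k$ by decomposing a word in $\W_{n,k}^{\geq r-1}$ according to the positions of its letters equal to $r-1$ (the minimal allowed value). First I would handle the base case $k=r-1$: here every letter is forced to equal $r-1$, so the bargraph is a $1$-high strip of width $n$, in which no $r\times s$ rectangle with $r\geq 2$ can occur; hence $a_{n,r-1}^{\geq r-1}=1$ for all $n$ and $A_{r-1}^{\geq r-1}(x,t)=1/(1-x)$.

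For $k>r-1$, write a word $w\in\W_{n,k}^{\geq r-1}$ by marking the maximal blocks of consecutive letters that are $\geq r$ (equivalently, delete the letters equal to $r-1$ and look at the resulting factors, each of which lies in $\W^{\geq r}$). A letter equal to $r-1$ contributes a column of height $r-1$; over a run of such columns of length $\ell$, a rectangle of height $s$ sitting at the bottom rows can have its $s$ rows only if $s\le r-1$, but more importantly the key bookkeeping is which $r\times s$ rectangles straddle the boundary between an ``$\geq r$'' block and the height-$(r-1)$ columns around it. Since any $r\times s$ rectangle must occupy $r$ rows and we are in words with all entries $\geq r-1$, an $r\times s$ rectangle lying in the bottom rows spans columns all of height $\geq$ its top row; I would separate the count into rectangles entirely within an ``$\geq r$'' block (these are governed, after subtracting $1$ from every letter, by $A_{k-1}^{\geq r-1}$ evaluated with $x\mapsto tx$, the substitution recording that shifting the alphabet down by one turns a height-$r$ occurrence into a height-$(r-1)$ one and hence changes the $t$-weight, which is the source of the $t^{s-1}$ and $tx$ appearing in $\alpha,\gamma$ and in the arguments of $A_{k-1}^{\geq r-1}$) and rectangles that use some height-$(r-1)$ columns (these are width-$\ge s$ runs at height exactly $\le r-1$, and counting them across a block boundary produces the explicit rational correction terms $\beta_k$ and $\delta_k$). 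Summing the resulting geometric-type series over all arrangements of height-$(r-1)$ runs and $\geq r$ blocks yields a linear-fractional transformation of $A_{k-1}^{\geq r-1}(tx,t)$, which is exactly the stated form with numerator $\alpha A_{k-1}^{\geq r-1}(tx,t)+\beta_k$ and denominator $\gamma A_{k-1}^{\geq r-1}(tx,t)+\delta_k$.

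The routine part is then the explicit evaluation: the polynomials $1-(mx)^{s}$, $1-(mx)^{s-1}$ with $m=k-r+1$ and $m=k-r+2$ arise from summing $t$-weighted contributions of runs of length $<s$ (which carry weight $1$, contributing a partial geometric series $\sum_{j<s}(mx)^{j}$, i.e.\ $\frac{1-(mx)^{s}}{1-mx}$ up to the extremal term) versus runs of length $\geq s$ (which start to accumulate factors of $t$); the terms divided by $t^{s-1}$ with argument $(k-r+1)tx$ come from the image under the $x\mapsto tx$ substitution applied to the ``one smaller alphabet'' count. The main obstacle I anticipate is the careful case analysis at the junctions between a block of letters $\geq r$ and the flanking height-$(r-1)$ columns: one must avoid double-counting an $r\times s$ rectangle that is simultaneously ``inside a block'' and ``uses boundary columns,'' and one must correctly track when a rectangle of height $s$ at the very bottom fits versus when it needs the full $r$ rows — getting the inclusion–exclusion right here is precisely what fixes the exact shape of $\beta_k$ and $\delta_k$ (in particular the subtracted term $((k-r+2)x)^{s-1}\frac{1-((k-r+1)/(k-r+2))^{s-1}}{1-(k-r+1)/(k-r+2)}$, which removes the overcounted configurations). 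Once the decomposition is pinned down, verifying the algebra reduces to summing finite and infinite geometric series and collecting terms.
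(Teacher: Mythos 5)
Your overall skeleton is the right one and matches the paper's: the base case $k=r-1$ is handled identically, and for $k\geq r$ the paper also decomposes a word in $\W_{n,k}^{\geq r-1}$ by the occurrences of the letter $r-1$ (concretely, by the position of the \emph{leftmost} such letter, via $a_{n,k}^{\geq r-1}=a_{n,k}^{\geq r}+\sum_{i=1}^{n}a_{i-1,k}^{\geq r}a_{n-i,k}^{\geq r-1}$, which is the linear-recursion form of your sequence-of-blocks decomposition) and then uses the shift-down identity $a_{m,k}^{\geq r}=t^{\max\{m-s+1,0\}}a_{m,k-1}^{\geq r-1}$, which is exactly the source of the factor $1/t^{s-1}$ and of the substitution $x\mapsto tx$ that you correctly identify.

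However, there is a genuine conceptual error in the part of your argument that is supposed to produce $\beta_k$ and $\delta_k$. An $r\times s$ rectangle occupies $r$ consecutive rows, so every one of the $s$ columns it spans must have height at least $r$; a column of height exactly $r-1$ can therefore never be part of any such rectangle. Consequently there are \emph{no} rectangles that ``use some height-$(r-1)$ columns,'' no rectangles straddling a block boundary, and no double-counting to correct by inclusion--exclusion --- the letters equal to $r-1$ are pure separators and the statistic is exactly additive over the maximal blocks of letters $\geq r$ (this additivity is precisely what makes the convolution $a_{i-1,k}^{\geq r}a_{n-i,k}^{\geq r-1}$ legitimate). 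The terms $\beta_k$ and $\delta_k$ do not encode any boundary rectangles: they are pure generating-function bookkeeping, coming from the initial values $a_{n,k}^{\geq r-1}=(k-r+2)^{n}$ for $n\leq s-1$ (whence the truncated geometric sums $\frac{1-((k-r+2)x)^{s}}{1-(k-r+2)x}$, etc.) and from splitting the convolution at $i=s$, where the exponent of $t$ in $t^{\max\{i-s,0\}}$ starts to grow. As written, your plan would have you hunting for a phantom overcount at the junctions while omitting the actual computation that yields the stated $\beta_k$ and $\delta_k$; to complete the proof you should instead write down the recursion above, multiply by $x^{n}$, sum over $n\geq s$, add $\sum_{n=0}^{s-1}(k-r+2)^{n}x^{n}$, and collect terms.
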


\begin{proof}
If $n\leq s-1$, no word of length $n$ can contain an $r\times s$ rectangle. Since there are $(k-r+2)^n$ such words, we have $a_{n,k}^{\geq r-1} = t^0 (k-r+2)^n = (k-r+2)^n$. Assume now that $n\geq s$.
If $k=r-1$, then $\W_{n,r-1}^{\geq r-1}$ consists of a single word $(r-1)\cdots (r-1)$ of length $n$, containing no  $r\times s$ rectangles. Thus, $a_{n,r-1}^{\geq r-1} = 1$ and it follows that $A_{r-1}^{\geq r-1}(x,t)=1/(1-x)$. Assume that $k\geq r$. We have
\begin{align}
a_{n,k}^{\geq r-1}&=a_{n,k}^{\geq r}+\sum_{i=1}^{n}a_{i-1,k}^{\geq r}a_{n-i,k}^{\geq r-1}\nonumber\\&=t^{n-s+1}a_{n,k-1}^{\geq r-1}+\sum_{i=1}^{s-1}a_{i-1,k-1}^{\geq r-1}a_{n-i,k}^{\geq r-1}+\sum_{i=s}^{n}t^{i-s}a_{i-1,k-1}^{\geq r-1}a_{n-i,k}^{\geq r-1}.  \nonumber
\end{align}
Multiplying both sides of this equation by $x^n$, summing over $n\geq s$ and adding $\sum_{n=0}^{s-1}a_{n,k}^{\geq r-1}x^{n}$ to both sides, with some algebra we obtain the equation
\begin{align}
&A_{k}^{\geq r-1}(x,t)\bigg(-\frac{x}{t^{s-1}}A_{k-1}^{\geq r-1}(tx,t)+1-\frac{x(1-((k-r+1)x)^{s-1})}{1-(k-r+1)x}+\nonumber\\&  \hspace{10cm} \frac{x(1-((k-r+1)tx)^{s-1})}{t^{s-1}(1-(k-r+1)tx)}\bigg)=\nonumber  \\&\frac{1}{t^{s-1}}A_{k-1}^{\geq r-1}(tx,t)+\frac{1-((k-r+2)x)^{s}}{1-(k-r+2)x}-\frac{1}{t^{s-1}}\frac{1-((k-r+1)tx)^{s}}{1-(k-r+1)tx}-\nonumber\\&\hspace{1cm}\frac{x}{1-(k-r+2)x}\left(\frac{1-((k-r+1)x)^{s-1}}{1-(k-r+1)x}-((k-r+2)x)^{s-1}\frac{1-\left(\frac{k-r+1}{k-r+2}\right)^{s-1}}{1-\frac{k-r+1}{k-r+2}}\right),\nonumber
\end{align}
from which the statement immediately follows.
\end{proof}

\begin{theorem}\label{thm;a0}
For $k\geq r$, we have
\[
A_k(x,t) =
\frac{\bar{\alpha}(x,t)A_{k-1}^{\geq r-1}(tx,t) +\bar{\beta}_k(x,t)}{\bar{\gamma}(x,t)A_{k-1}^{\geq r-1}(tx,t) +\bar{\delta}_k(x,t)},
\]
where $\bar{\alpha}(x,t) = 1/t^{s-1}, \bar{\gamma}(x,t) = -(r-1)x/t^{s-1}$, and
\begin{align}
\bar{\beta}_k(x,t)&=\frac{1-(kx)^{s}}{1-kx}-\frac{1-((k-r+1)tx)^{s}}{t^{s-1}(1-(k-r+1)tx)}\nonumber\\&-\frac{(r-1)x}{1-kx}\left(\frac{1-((k-r+1)x)^{s-1}}{1-(k-r+1)x}-(kx)^{s-1}\frac{1-\left(\frac{k-r+1}{k}\right)^{s-1}}{1-\frac{k-r+1}{k}}\right),\nonumber\\
\bar{\delta}_k(x,t) &=1-\frac{(r-1)x(1-((k-r+1)x)^{s-1})}{1-(k-r+1)x}+\frac{(r-1)x(1-((k-r+1)tx)^{s-1})}{t^{s-1}(1-(k-r+1)tx)}.\nonumber
\end{align}

\begin{proof}
If $n\leq s-1$, no word of length $n$ can contain a $1\times s$ rectangle. Since there are $k^n$ such words, we have $a_{n,k} = t^0 k^n = k^n$. Assume now that $n\geq s$. We have
\begin{align*}
a_{n,k}&=t^{n-s+1}a_{n,k}^{\geq r}+(r-1)\sum_{i=1}^{n}a_{i-1,k}^{\geq r}a_{n-i,k}\\&=t^{n-s+1}a_{n,k-1}^{\geq r-1}+(r-1)\sum_{i=1}^{s-1}a_{i-1,k-1}^{\geq r-1}a_{n-i,k}+(r-1)\sum_{i=s}^{n}t^{i-s}a_{i-1,k-1}^{\geq r-1}a_{n-i,k}.
\end{align*}
Multiplying both sides of this equation by $x^n$, summing over $n\geq s$ and adding $\sum_{n=0}^{s-1} a_{n,k}x^n$ to both sides, with some algebra we obtain the equation

\begin{align}
&A_{k}(x,t)\bigg(-\frac{(r-1)x}{t^{s-1}}A_{k-1}^{\geq r-1}(tx,x)+1-\frac{(r-1)x(1-((k-r+1)x)^{s-1})}{1-(k-r+1)x}+\nonumber\\ & \hspace{8.5cm} \frac{(r-1)x(1-((k-r+1)tx)^{s-1})}{t^{s-1}(1-(k-r+1)tx)}\bigg)=\nonumber\\ &\frac{1}{t^{s-1}}A_{k-1}^{\geq r-1}(tx,t)+\frac{1-(kx)^{s}}{1-kx}-\frac{1-((k-r+1)tx)^{s}}{t^{s-1}(1-(k-r+1)tx)}-\nonumber\\&\hspace{4cm}\frac{(r-1)x}{1-kx}\left(\frac{1-((k-r+1)x)^{s-1}}{1-(k-r+1)x}-(kx)^{s-1}\frac{1-\left(\frac{k-r+1}{k}\right)^{s-1}}{1-\frac{k-r+1}{k}}\right),\nonumber
\end{align} from which the statement immediately follows.
\end{proof}
\end{theorem}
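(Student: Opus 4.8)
The plan is to reuse the scan-and-split scheme behind the Lemma, now scanning for columns that are too short to carry an $r\times s$ rectangle. Fix $k\geq r$. If $n\leq s-1$, no word of length $n$ admits an $r\times s$ rectangle, so $a_{n,k}=k^{n}$. For $n\geq s$ I would decompose $w=w_1\cdots w_n\in[k]^n$ according to the position $i$ of its first letter with $w_i\leq r-1$: that column has height $\leq r-1<r$, hence no $r\times s$ rectangle meets it, and therefore $\rc_{r\times s}(w)$ equals the count on the prefix $w_1\cdots w_{i-1}$ (a word all of whose letters are $\geq r$) plus the count on the suffix $w_{i+1}\cdots w_n$ (an arbitrary word over $[k]$). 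Summing over $i$ and over the $r-1$ admissible barrier values, and adding the term in which no letter is $\leq r-1$, gives
\[
a_{n,k}=a_{n,k}^{\geq r}+(r-1)\sum_{i=1}^{n}a_{i-1,k}^{\geq r}\,a_{n-i,k}.
\]

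Next I would eliminate $\W^{\geq r}$ in favour of $\W^{\geq r-1}$. Subtracting $1$ from each letter is a bijection $\W_{n,k}^{\geq r}\to\W_{n,k-1}^{\geq r-1}$ which, on bargraphs, deletes the (completely filled) bottom row; for $n\geq s$ that row carries exactly $n-s+1$ rectangles (one per window of $s$ consecutive columns) and the bijection preserves all rectangles lying above it, so $a_{n,k}^{\geq r}=t^{\,n-s+1}a_{n,k-1}^{\geq r-1}$ for $n\geq s$, while $a_{n,k}^{\geq r}=a_{n,k-1}^{\geq r-1}=(k-r+1)^{n}$ for $n<s$. Inserting this and splitting the convolution at the index $i=s$ (below which the power of $t$ is absent) yields, for $n\geq s$,
\[
a_{n,k}=t^{\,n-s+1}a_{n,k-1}^{\geq r-1}+(r-1)\sum_{i=1}^{s-1}a_{i-1,k-1}^{\geq r-1}a_{n-i,k}+(r-1)\sum_{i=s}^{n}t^{\,i-s}a_{i-1,k-1}^{\geq r-1}a_{n-i,k}.
\]

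Finally I would pass to generating functions: multiply by $x^{n}$, sum over $n\geq s$, and add $\sum_{n=0}^{s-1}k^{n}x^{n}=\tfrac{1-(kx)^{s}}{1-kx}$ to both sides. The first term contributes $t^{1-s}A_{k-1}^{\geq r-1}(tx,t)$ up to a finite geometric correction, and each convolution factors as a partial sum of $A_{k-1}^{\geq r-1}(tx,t)$ times $A_k(x,t)$, again up to corrections of the shape $\tfrac{1-(cx)^{s-1}}{1-cx}$. Collecting the $A_k(x,t)$-terms on the left turns the identity into a single linear equation
\[
A_k(x,t)\bigl(\bar\gamma(x,t)A_{k-1}^{\geq r-1}(tx,t)+\bar\delta_k(x,t)\bigr)=\bar\alpha(x,t)A_{k-1}^{\geq r-1}(tx,t)+\bar\beta_k(x,t)
\]
with precisely the coefficients listed in the statement, and solving for $A_k(x,t)$ finishes the proof; here $A_{k-1}^{\geq r-1}(tx,t)$ is itself known explicitly by iterating the Lemma down to $A_{r-1}^{\geq r-1}(x,t)=1/(1-x)$.

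The one genuinely delicate point is the bookkeeping in the last step: one must keep track of which summands of the two convolutions lie below the threshold $i=s$ (where $a_{i-1,k}^{\geq r}$ has not yet acquired its power $t^{\,i-s}$) and then compress a fairly large family of finite geometric sums into the compact closed forms for $\bar\beta_k$ and $\bar\delta_k$. Conceptually, however, the argument is exactly the Lemma's, with the single barrier letter $r-1$ replaced by the $r-1$ barrier letters $1,\dots,r-1$ — the source of the factor $r-1$ — and with the full alphabet of size $k$ taking the place of the size-$(k-r+2)$ alphabet, which is why $k-r+2$ is replaced by $k$ in the terms coming from the short (length $<s$) words.
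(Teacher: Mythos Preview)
Your proposal is correct and follows essentially the same argument as the paper: decompose by the first barrier letter of height $\leq r-1$, pass from $\W^{\geq r}_{n,k}$ to $\W^{\geq r-1}_{n,k-1}$ via the ``delete the bottom row'' bijection (picking up the factor $t^{\,n-s+1}$), split the convolution at $i=s$, and translate to generating functions with the same finite-geometric bookkeeping. If anything, you spell out the combinatorial justification for the recursion more fully than the paper does, and your first displayed identity $a_{n,k}=a_{n,k}^{\geq r}+(r-1)\sum_{i}a_{i-1,k}^{\geq r}a_{n-i,k}$ is the clean form before the bijection is applied (the paper's corresponding line already carries the factor $t^{\,n-s+1}$, which only makes sense after the substitution $a_{n,k}^{\geq r}\mapsto a_{n,k-1}^{\geq r-1}$).
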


\begin{remark}\label{rem;abcd1}
In principle, it is possible, but rather tedious, to obtain the total number of $r\times s$ words over all words of length $n$, inductively by differentiating \eqref{eq;842} with respect to $t$ and then substituting $t=1$. In the following, we present a more elegant way, that works for every $r\in[k]$. We shall make use of the following identity:
\begin{equation}\label{eq;1122}
\sum_{x_1,\ldots,x_s\in[k]}\max\left\{ \min\left\{ x_1,\ldots,x_s\right\}-r+1,0\right\} =\sum_{j=1}^{k-r+1}j^s.
\end{equation} This identity follows from the following identity, which is not hard to prove, e.g., with the inclusion-exclusion principle:
\[\sum_{x_{1},\ldots,x_r\in[s]}\min\left\{ x_{1},\ldots,x_r\right\} =\sum_{j=1}^sj^r. \]
\end{remark}

\begin{theorem}\label{eq;hs1}
Assume that $r\in [k]$ and let $f_{n,k}$ denote the total number of $r\times s$ words over all words of length $n$. Let $F_k(x)$ denote the generating function of the numbers $f_{n,k}$. Then
\[F_k(x)=\frac{x^s\sum_{j=1}^{k-r+1}j^s}{(1-kx)^{2}}.\] Thus, for $n\geq s$, we have
\[f_{n,k} =
(n-s+1)k^{n-s}\sum_{j=1}^{k-r+1}j^s.\]
\end{theorem}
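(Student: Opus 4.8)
The plan is to compute $f_{n,k}$ directly by a double-counting argument, avoiding the differentiation of the continued-fraction-type expression \eqref{eq;842}. The quantity $f_{n,k}=\sum_{w\in\W_{n,k}}\rc_{r\times s}(w)$ counts pairs $(w,R)$ where $w\in[k]^n$ and $R$ is an occurrence of an $r\times s$ rectangle in the bargraph of $w$. An occurrence of an $r\times s$ rectangle is determined by a starting column $i$ (with $1\le i\le n-s+1$, since the rectangle spans $s$ consecutive columns $i,i+1,\ldots,i+s-1$) together with a vertical position; once $i$ is fixed, the number of vertical placements of the rectangle inside the columns $w_i,\ldots,w_{i+s-1}$ is exactly $\max\{\min\{w_i,\ldots,w_{i+s-1}\}-r+1,\,0\}$, because the rectangle has height $r$ and must fit below the minimum of those $s$ column heights. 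Therefore
\[
f_{n,k}=\sum_{i=1}^{n-s+1}\ \sum_{w\in[k]^n}\max\left\{\min\{w_i,\ldots,w_{i+s-1}\}-r+1,\,0\right\}.
\]

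For a fixed $i$, the summand depends only on the $s$ coordinates $w_i,\ldots,w_{i+s-1}$; the remaining $n-s$ coordinates are free, contributing a factor $k^{n-s}$. Hence the inner sum equals $k^{n-s}\sum_{x_1,\ldots,x_s\in[k]}\max\{\min\{x_1,\ldots,x_s\}-r+1,0\}$, which by identity \eqref{eq;1122} is $k^{n-s}\sum_{j=1}^{k-r+1}j^s$. Since this value is independent of $i$ and there are $n-s+1$ choices of $i$, we get $f_{n,k}=(n-s+1)k^{n-s}\sum_{j=1}^{k-r+1}j^s$ for $n\ge s$, and $f_{n,k}=0$ for $n<s$ (no $s$-column rectangle fits). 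The generating function then follows from $\sum_{n\ge s}(n-s+1)k^{n-s}x^n = x^s\sum_{m\ge 0}(m+1)(kx)^m = x^s/(1-kx)^2$, so $F_k(x)=\dfrac{x^s\sum_{j=1}^{k-r+1}j^s}{(1-kx)^2}$.

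The only genuinely nontrivial ingredient is identity \eqref{eq;1122}, which the excerpt supplies in Remark \ref{rem;abcd1} (reduced there to the symmetric identity $\sum_{x_1,\ldots,x_r\in[s]}\min\{x_1,\ldots,x_r\}=\sum_{j=1}^{s}j^r$, provable by inclusion–exclusion); I would simply invoke it. One small point to check carefully is the edge case $r-1\ge k$ versus $r\le k$: the hypothesis $r\in[k]$ guarantees $k-r+1\ge 1$, so the sum $\sum_{j=1}^{k-r+1}j^s$ is a genuine nonempty sum and the $\max$ in \eqref{eq;1122} is not vacuously zero — this is exactly why the theorem restricts to $r\in[k]$. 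With that observed, everything else is the routine power-series manipulation above, so I expect no real obstacle beyond being careful that the count of vertical placements of the rectangle is $\min\{\cdots\}-r+1$ and not $\min\{\cdots\}-r$ or $\min\{\cdots\}-r+2$.
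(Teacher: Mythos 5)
Your proof is correct, and it takes a genuinely more direct route than the paper. The paper refines $a_{n,k}$ by the first $s-1$ letters, writes the functional equation \eqref{eq;1a} for $A_k^{w_1\cdots w_{s-1}}(x,t)$ (prepending a letter $w_0$ creates exactly $\max\{\min\{w_0,\ldots,w_{s-1}\}-r+1,0\}$ new rectangles), differentiates at $t=1$, sums over all prefixes, and then solves a linear equation for $F_k(x)$, finally invoking \eqref{eq;1122}. You instead count pairs (word, rectangle occurrence) directly: the same max--min expression gives the number of vertical placements in a fixed window of $s$ columns, the other $n-s$ letters contribute $k^{n-s}$ freely, and \eqref{eq;1122} evaluates the window sum. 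The combinatorial kernel is identical in both arguments --- the placement count $\max\{\min-r+1,0\}$ and identity \eqref{eq;1122} --- but your version is essentially linearity of expectation and dispenses with the generating-function recursion entirely, producing the coefficient formula first and the generating function as an afterthought; the paper's version is heavier but reuses the refined generating functions that organize the rest of its Section 2. Your checks of the edge cases ($n<s$ giving $f_{n,k}=0$, and $r\in[k]$ ensuring the sum $\sum_{j=1}^{k-r+1}j^s$ is nonempty) and of the convention that $r$ is the height and $s$ the number of columns (consistent with Figure \ref{fig;2}) are all correct, so there is no gap.
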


\begin{proof}
For $w_1\cdots w_{s-1}\in \W_{s-1,k}$, we denote by $\W_{n,k}^{w_1\cdots w_{s-1}}$ the subset of $\W_{n,k}$ consisting of those words $\bar{w}_1\cdots \bar{w}_n\in\W_{n,k}$ such that $\bar{w}_i = w_i$, for every $i\in[s-1]$. Let $a_{n,k}^{w_1\cdots w_{s-1}}$ be the restriction of $a_{n,k}$ to $\W_{n,k}^{w_1\cdots w_{s-1}}$ and let $A_k^{w_1\cdots w_{s-1}}(x,t)$ denote the generating function of the numbers $a^{w_1\cdots w_{s-1}}_{n,k}$. We have
\begin{equation}\label{eq;1a}
A(x,t)^{w_1\cdots w_{s-1}}=x^{s-1}+x\sum_{w_0\in[k]}t^{\max\left\{ \min\left\{ w_0,\ldots,w_{s-1}\right\} -r+1,0\right\} }A(x,t)^{w_0\cdots w_{s-2}}.
\end{equation}
Differentiating \eqref{eq;1a} with respect to $t$ and substituting $t=1$, we obtain
\begin{align}
\frac{\partial}{\partial t}A(x,t)^{w_1\cdots w_{s-1}}_{|(x,t)=(x,1)}&=x\sum_{w_0\in[k]}\max\left\{ \min\left\{ w_0,\ldots,w_{s-1}\right\} -r+1,0\right\} A(x,1)^{w_0\cdots w_{s-2}}\nonumber\\&+x\frac{\partial}{\partial t}A(x,t)^{w_0\cdots w_{s-2}}_{|(x,t)=(x,1)}.\label{eq;1b}
\end{align} Summing \eqref{eq;1b} over $w_1\cdots w_{s-1}\in \W_{s-1,k}$ and noticing that
\[A(x,1)^{w_0\cdots w_{s-2}}=\frac{x^{s-1}}{1-kx},\] we conclude that
\[F_k(x)=\frac{x^s}{1-kx}\sum_{w_{0}\cdots w_{s-1}\in\W_{s,k}}\max\left\{ \min\left\{ w_{0},\ldots,w_{s-1}\right\} -r+1,0\right\} +kxF_k(x).\] Solving for $F_k(x)$ and using \eqref{eq;1122}, the assertion follows.
\end{proof}

\begin{table}[H]
\begin{center}
{\renewcommand{\arraystretch}{1.1}
\begin{tabular}{||c|c| c||c|c ||}
 \hline
 $k$ & $r$ & $s$& $f_{n,k}$ & OEIS \\ [0.5ex]
 \hline\hline
 2&1&1   &$3n2^{n - 1}$ &\seqnum{A167667} \\ \hline
 2&1&2    &$5(n-1)2^{n-2}$ &- \\ \hline
 2&1&3    &$9(n-2)2^{n-3}$ &- \\ \hline

 2&2&1   &$n 2^{n-1}$ &\seqnum{A001787} \\ \hline
 2&2&2   &$(n-1)2^{n-2}$ &\seqnum{A001787} \\ \hline
 2&2&3   &$(n-2) 2^{n-3}$ &\seqnum{A001787} \\ \hline

 3&1&1    &$6 n 3^{n-1}$ &- \\ \hline
 3&1&2  &$14 (n-1) 3^{n-2}$ &- \\ \hline
 3&1&3   &$36 (n-2) 3^{n-3}$ &- \\ \hline

 3&2&1   &$n 3^n$ &\seqnum{A036290} \\ \hline
 3&2&2   &$5 (n-1) 3^{n-2}$ &- \\ \hline
 3&2&3   &$(n-2) 3^{n-1}$ &- \\ \hline

 3&3&1   &$n3^{n - 1}$ &\seqnum{A027471} \\ \hline
 3&3&2   &$(n-1) 3^{n-2}$ &\seqnum{A027471} \\ \hline
 3&3&3   &$(n - 2)3^{n - 3}$ &\seqnum{A027471} \\ \hline
\end{tabular}
\caption{The total number $f_{n,k}$ of $r\times s$ rectangles over all words of length $n$, for several small values of $k,r$, and $s$.}\label{tablea1}}
\end{center}
\end{table}

\section{Main results - Catalan words}

Recall that the set of Catalan words of length $n$, denoted by $\C_n$, is defined as
\[\C_{n}=\left\{ w_{1}\cdots w_{n}\;:\;w_{1}=1\text{ and }w_{i+1}\leq w_{i}+1,\text{ for every \ensuremath{i\in[n-1]}}\right\}.\] Denote by $p_{n}=p_{n}(t)$ (resp.\ $q_{n}=q_{n}(t)$) the distribution on $\C_{n}$ of the number of $1\times s$ (resp.\ $r\times s$, for $r\geq 2$) rectangles and let $P(x,t)$ (resp.\ $Q(x,t)$) denote the generating function of the numbers $p_{n}$ (resp.\ $q_n$). The Catalan numbers, denoted by $C_n$, satisfy 
\begin{equation}\label{eq;733}
C_0 = 1,\; C_{n+1}=\sum_{k=0}^nC_kC_{n-k} \textnormal{ for every } n\geq 0\end{equation} (e.g., \cite[(1.1)]{St}). The generating function of the Catalan numbers, denoted by $C(x)$, satisfies $C(x)=(1-\sqrt{1-4x})/2x$ and
\begin{equation}\label{eq;7333}
xC(x)^2 -C(x)+1=0.    
\end{equation} We distinguish between two cases, namely $r=1$ and $r\geq 2$.

\subsection{\texorpdfstring{$1\times s$}{} rectangles}

\begin{theorem}
We have
\begin{equation}\label{eq;gla1}
P(x,t)= \frac{\alpha(x,t)+\beta(x,t)txP(tx,t)}{1-txP(tx,t)},
\end{equation} where
\begin{align}
\alpha(x,t) &=\sum_{n=0}^{s-1}C_{n}x^{n}+x^{s-1}\sum_{i=0}^{s-2}C_{s-2-i}\sum_{n=0}^{i}C_{n}(tx)^{n-i},\nonumber\\
\beta(x,t)&=-\sum_{n=0}^{s-2}C_{n}x^{n}+\frac{1}{t^{s-1}}\sum_{n=0}^{s-2}C_{n}(tx)^{n}.\nonumber
\end{align}
\end{theorem}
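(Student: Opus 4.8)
The plan is to derive a functional equation for $P(x,t)$ by a first-return (or last-descent) decomposition of Catalan words, mirroring the approach used for words in Theorem \ref{thm;a0}. A Catalan word $w = w_1\cdots w_n$ with $n \geq 1$ has $w_1 = 1$; consider the positions where the letter $1$ appears. Writing $w = 1 w^{(1)} 1 w^{(2)} \cdots 1 w^{(m)}$ where each block $w^{(j)}$ contains no $1$, the constraint $w_{i+1}\le w_i+1$ forces each maximal block immediately following a $1$ to, after subtracting $1$ from every entry, again be a Catalan word (of length one less than the block including its leading $1$). This is the standard bijection $\C \leftrightarrow$ sequences of shifted Catalan words, and it is what produces $C(x) = 1/(1-xC(x))$ combinatorially; here we must additionally track how the shift by $1$ and the concatenation of blocks affect the number of $1\times s$ rectangles.

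The key bookkeeping point is that a $1\times s$ rectangle sits at some fixed height $h \geq 1$, occupying $s$ consecutive columns all of whose bars reach height $\geq h$. Rectangles entirely at height $\geq 2$ live inside a single shifted block (they never straddle a column of height $1$), and after the downshift they correspond bijectively to $1\times s$ rectangles in the corresponding smaller Catalan word — this is the source of the $P(tx,t)$ terms, the variable change $x \mapsto tx$ accounting for... actually, more carefully: a block of bargraph-length $\ell$ contributes, besides its internal rectangles, also rectangles at height $1$ that may extend across the adjacent $1$-columns. So I would split the count as (rectangles at height $1$) $+$ (rectangles at height $\geq 2$), handle the height-$\geq 2$ part via the recursive structure $txP(tx,t)$ per block, and handle the height-$1$ part — which depends only on the sequence of block lengths and the number of blocks — by a separate, elementary generating-function computation. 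The boundary corrections for blocks and prefixes of length $< s$ are exactly what will produce the polynomial correction terms $\alpha(x,t)$ and $\beta(x,t)$, and the partial sums $\sum_{n=0}^{s-1} C_n x^n$, $\sum_{n=0}^{s-2} C_n x^n$ etc. are the generating functions of "too short to contain a rectangle" pieces.

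Concretely, I expect the derivation to go: (1) set up $w = 1 v$ with $v$ empty or $v = w^{(1)} 1 w^{(2)} \cdots$; (2) peel off the first block, getting a relation of the shape $P = (\text{stuff})+ (\text{stuff})\cdot txP(tx,t)\cdot P$, where the factor $P$ on the right encodes "the rest of the word after the first $1$-block-plus-its-$1$" and $txP(tx,t)$ encodes the first shifted block; (3) solve the resulting linear equation in $P(x,t)$, which gives the displayed form $P = \dfrac{\alpha + \beta\, txP(tx,t)}{1 - txP(tx,t)}$; (4) identify $\alpha$ and $\beta$ by carefully tracking the height-$1$ rectangles and all the short-length boundary cases, checking the identification against small $s$ (e.g.\ $s=1$, where $1\times 1$ rectangles just count area) and against the Catalan generating function at $t=1$ ($P(x,1) = C(x)$).

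The main obstacle will be step (4): correctly enumerating the height-$1$ rectangles that straddle several blocks and $1$-columns and packaging the resulting sums into the closed forms for $\alpha$ and $\beta$ — in particular getting the double sum $x^{s-1}\sum_{i=0}^{s-2}C_{s-2-i}\sum_{n=0}^{i}C_{n}(tx)^{n-i}$ right, which evidently comes from a short initial segment (of total length $s-1$) interacting with a short first block, so that the rectangle at height $1$ spanning the junction is counted with weight $t$ only on the part of it lying over the interior. I would double check the whole functional equation numerically by expanding both sides to $O(x^6)$ or so for $s=2$ and $s=3$ before trusting the closed form.
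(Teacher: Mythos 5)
Your plan is essentially the paper's proof: the paper cuts at the rightmost occurrence of the letter $1$ rather than peeling off the first $1$-block (the two convolutions are term-by-term identical under the reindexing $i\mapsto n-i+1$), weights the height-$1$ rectangles not contained in the recursive piece by an explicit power of $t$, encodes the height-$\geq 2$ rectangles of the shifted block via $P(tx,t)$, and solves the resulting equation, which is linear in $P(x,t)$, after isolating the short-length boundary terms that produce $\alpha$ and $\beta$. The bookkeeping you defer to step (4) is precisely the paper's main computation and goes through as you describe, so there is no gap in the approach, only the unexecuted algebra.
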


\begin{proof}
For $i\in[n]$, we denote by $\C_{n}^{(i)}$ the subset of $\C_{n}$ consisting of those Catalan words with the right-most $1$ at the $i$th position. Let $p_{n}^{(i)}$ be the restriction of $p_{n}$ to $\C_{n}^{(i)}$. Clearly, for $0\leq n\leq s-1$, we have $p_n = C_n$. Thus, assume that $n\geq s$. We have
\begin{align}
p_{n}&=\sum_{i=1}^{n}p_{n}^{(i)}\nonumber
\\&=\sum_{i=1}^{n}p_{i-1}t^{\max\left\{ 0,n-\max\left\{ 1,i-s+1\right\} +1-s+1\right\} }p_{n-i}\nonumber
\\&=\sum_{i=1}^{s-1}p_{i-1}t^{n-s+1 }p_{n-i}+\sum_{i=s}^{n}p_{i-1}t^{n-i+1 }p_{n-i}.
\end{align}
Multiplying both sides of this equation by $x^n$, summing over $n\geq s$ and adding $\sum_{n=0}^{s-1}C_{n}x^n$ to both sides, with some algebra we obtain the equation
\begin{align}
P(x,t)(1-txP(tx,t))&=\sum_{n=0}^{s-1}C_{n}x^{n}+x^{s-1}\sum_{i=0}^{s-2}C_{s-2-i}\sum_{n=0}^{i}C_{n}(tx)^{n-i}\nonumber\\&+\left(-\sum_{n=0}^{s-2}C_{n}x^{n}+\frac{1}{t^{s-1}}\sum_{n=0}^{s-2}C_{n}(tx)^{n}\right)txP(tx,t),\label{eq;lba}
\end{align}
from which the statement immediately follows.
\end{proof}

\begin{remark}\label{rem;abcd}
We observe that $P(x,1)=C(x)$, as expected. Indeed, by \eqref{eq;lba} we have
\begin{align}
P(x,1)(1-xP(x,1))&=\sum_{n=0}^{s-1}C_{n}x^{n}-x\sum_{i=0}^{s-2}C_{i}x^{i}\sum_{n=0}^{s-2-i}C_{n}x^{n}\nonumber\\&=\sum_{n=0}^{s-1}C_{n}x^{n}-\sum_{n=0}^{s-2}\left(\sum_{i=0}^{n}C_{i}C_{n-i}\right)x^{n+1}\nonumber\\
&=\sum_{n=0}^{s-1}C_{n}x^{n}-\sum_{n=0}^{s-2}C_{n+1}x^{n+1}\nonumber\\&=1,\nonumber
\end{align} where in the second equality we used \eqref{eq;733}. By \eqref{eq;7333}, $P(x,1)=C(x)$.
\end{remark}

\begin{theorem}\label{eq;hs2}
For $n\geq s-1$, the total number of $1\times s$ rectangles contained in all words belonging to $\C_{n}$ is given by 
\begin{equation}\label{eq;pp1}
g_n=\frac{1}{2}\left(\sum_{i=0}^{s-2}
\frac{s-1-i}{i+1}\binom{2i}{i}\binom{2n-2i}{n-i}
-(2s-1)\binom{2n}{n}+4^n\right).
\end{equation}
\end{theorem}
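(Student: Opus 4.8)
The plan is to extract $g_n$ as the coefficient of $x^n$ in $\frac{\partial}{\partial t}P(x,t)\big|_{t=1}$ using the functional equation \eqref{eq;gla1} (equivalently \eqref{eq;lba}), exactly mirroring the technique used for words in Theorem \ref{eq;hs1} but now with the Catalan-number substitution $P(x,1)=C(x)$ from Remark \ref{rem;abcd}. Write $G(x)=\frac{\partial}{\partial t}P(x,t)\big|_{t=1}$, so that $g_n=[x^n]G(x)$. First I would differentiate \eqref{eq;lba} with respect to $t$ and set $t=1$; every term $P(tx,t)$ contributes both a $t$-derivative $xP'(x)+G(x)$ (where $P'$ denotes $\partial_x$) and, since $P(x,1)=C(x)$, known closed forms. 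The left side gives $G(x)(1-xC(x)) - C(x)x(xC'(x)+G(x))$, and the right side produces the $t$-derivatives of $\alpha(x,t)$ and of the $\beta(x,t)txP(tx,t)$ term at $t=1$. Collecting the $G(x)$ terms and using $1-xC(x)-xC(x)=1-2xC(x)=\sqrt{1-4x}$ (from \eqref{eq;7333}), I would solve a linear equation for $G(x)$, obtaining $G(x)=R(x)/\sqrt{1-4x}$ for an explicit rational-in-$C(x)$ expression $R(x)$ built from $\partial_t\alpha|_{t=1}$, $\partial_t\beta|_{t=1}$, $C(x)$, $C'(x)$, and the finite sums $\sum_{n=0}^{s-2}C_nx^n$, $\sum_{n=0}^{s-1}C_nx^n$.

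Next I would simplify $R(x)$ and then extract coefficients. The key auxiliary facts are: $C'(x)=\frac{C(x)^2}{1-2xC(x)}=\frac{C(x)^2}{\sqrt{1-4x}}$ (differentiate \eqref{eq;7333}); the standard expansions $[x^n]C(x)=\frac{1}{n+1}\binom{2n}{n}$, $[x^n]\frac{1}{\sqrt{1-4x}}=\binom{2n}{n}$, and $[x^n]\frac{1}{2}\cdot 4^n$; and the Cauchy-product identity $[x^n]\big(x^i C(x)\cdot\tfrac{1}{\sqrt{1-4x}}\big)$-type convolutions. The three families of terms in \eqref{eq;pp1}, namely $\sum_{i=0}^{s-2}\frac{s-1-i}{i+1}\binom{2i}{i}\binom{2n-2i}{n-i}$, $-(2s-1)\binom{2n}{n}$, and $4^n$, strongly suggest that after simplification $G(x)$ is a linear combination of $\frac{1}{\sqrt{1-4x}}\sum_{i=0}^{s-2}(s-1-i)C_i x^i$ (whose $n$th coefficient is $\sum_i (s-1-i)C_i\binom{2n-2i}{n-i}=\sum_i\frac{s-1-i}{i+1}\binom{2i}{i}\binom{2n-2i}{n-i}$), the term $\frac{2s-1}{\sqrt{1-4x}}$, and the generating function $\frac{1}{1-4x}$ of $4^n$; the overall factor $\tfrac12$ in \eqref{eq;pp1} should drop out of the algebra. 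So the target identity is
\[
G(x)=\frac{1}{2}\left(\frac{1}{\sqrt{1-4x}}\sum_{i=0}^{s-2}(s-1-i)C_i x^i-\frac{2s-1}{\sqrt{1-4x}}+\frac{1}{1-4x}\right),
\]
and I would verify it by clearing the $\sqrt{1-4x}$ denominators and checking the resulting polynomial/$C(x)$-identity, using \eqref{eq;7333} to reduce all powers of $C(x)$ and all occurrences of $\sqrt{1-4x}=1-2xC(x)$.

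The main obstacle I anticipate is purely computational rather than conceptual: correctly differentiating the two finite sums defining $\alpha(x,t)$ and $\beta(x,t)$ with respect to $t$ and then showing that the mass of lower-order Catalan terms telescopes, just as the convolution collapse $\sum_{i=0}^n C_iC_{n-i}=C_{n+1}$ was used in Remark \ref{rem;abcd}. In particular, the double sum $x^{s-1}\sum_{i=0}^{s-2}C_{s-2-i}\sum_{n=0}^{i}C_n(tx)^{n-i}$ has a somewhat delicate $t$-dependence (the exponent $n-i$ is negative), so I would first rewrite it in the cleaner form $\sum_{i=0}^{s-2}C_{s-2-i}x^{i}\sum_{n=0}^{i}C_n (tx)^{n}\cdot(x/(tx))^{\,i}$ before differentiating, to keep the bookkeeping manageable; alternatively, one can avoid $\alpha,\beta$ entirely by differentiating the recursion for $p_n$ directly, which may in fact be the shorter route. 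Once $G(x)$ is in the displayed closed form, reading off $g_n$ via the three coefficient extractions above is immediate, and the hypothesis $n\ge s-1$ simply ensures the finite sum over $i$ is not artificially truncated.
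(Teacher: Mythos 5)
Your proposal follows essentially the same route as the paper's proof: differentiate the functional equation \eqref{eq;gla1} (equivalently \eqref{eq;lba}) with respect to $t$, set $t=1$, use $P(x,1)=C(x)$ together with the $x$-derivative of $P$ (the paper's $H(x)$, which it derives from \eqref{eq;gla1} and which indeed equals $C'(x)=C(x)^2/\sqrt{1-4x}$, so your shortcut is fine), and solve the resulting linear equation for $G(x)$. You are also right that $\beta(x,1)=0$, so the coefficient of $G(x)$ is $1-2xC(x)=\sqrt{1-4x}$, which matches the paper's $(1-xC(x))^2-x$ up to an overall factor of $C(x)$. Where you genuinely diverge is the endgame: the paper does not guess the closed form of $G_s(x)$ but instead forms $\mathcal{G}(x,y)=\sum_{s\geq 1}G_s(x)y^s$, which collapses all the $s$-dependent finite Catalan sums into the single algebraic function $\sqrt{1-4xy}$, and then extracts the coefficient of $y^s$; this is what makes the simplification mechanical. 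Your guess-and-verify alternative is legitimate, and your conjectured identity
\[
G(x)=\frac{1}{2}\left(\frac{\sum_{i=0}^{s-2}(s-1-i)C_ix^i-(2s-1)}{\sqrt{1-4x}}+\frac{1}{1-4x}\right)
\]
is correct (it reproduces \eqref{eq;pp1} and the entries of Table \ref{tablea2}), though verifying it still requires taming the same finite double sums that the paper's auxiliary variable handles automatically. One computational slip to repair when you execute: differentiating $txP(tx,t)$ with respect to $t$ at $t=1$ yields $xC(x)+x\bigl(xC'(x)+G(x)\bigr)$, not merely $x\bigl(xC'(x)+G(x)\bigr)$ --- you dropped the product-rule contribution of the explicit factor $t$, and the resulting missing $-xC(x)^2$ on the left-hand side is needed to land on the constant $-(2s-1)$ in \eqref{eq;pp1}.
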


\begin{proof}
Define $H(x)=\frac{\partial}{\partial x}P(x,t)_{|(x,t)=(x,1)}$ and $G(x)=\frac{\partial}{\partial t}P(x,t)_{|(x,t)=(x,1)}$. Differentiating \eqref{eq;gla1} with respect to $x$ and substituting $t=1$, after some algebra obtain
\begin{align*}
&H(x)\left((1-xC(x))^{2}-x\overbrace{\left(\sum_{n=0}^{s-1}C_{n}x^{n}-\sum_{i=0}^{s-2}C_{i}\sum_{n=i}^{s-2}C_{n-i}x^{n+1}\right)}^{=1}\right)\\
&=\overbrace{\left(\sum_{n=0}^{s-1}nC_{n}x^{n-1}-\sum_{i=0}^{s-2}C_{i}\sum_{n=i}^{s-2}(n+1)C_{n-i}x^{n}\right)}^{=0}\\
&+C(x)\overbrace{\left(\sum_{n=0}^{s-1}(1-n)C_{n}x^{n}
+\sum_{i=0}^{s-2}C_{i}\sum_{n=i}^{s-2}nC_{n-i}x^{n+1}\right)}^{=1}.
\end{align*} It follows that 
\begin{equation}\label{eq;hua1}
H(x)=\frac{C(x)}{(1-xC(x))^{2}-x}.
\end{equation} 
Now, differentiating \eqref{eq;gla1} with respect to $t$ and substituting $t=1$, after some algebra, together with \eqref{eq;hua1}, we obtain
\begin{align*}
&G(x)\left((1-xC(x))^{2}-x\right)\\
&=\left(-\sum_{i=0}^{s-2}C_{i}\sum_{n=i}^{s-2}(n-s+2)C_{n-i}x^{n+1}+C(x)\sum_{n=0}^{s-2}(n+1-s)C_{n}x^{n+1}\right)(1-xC(x))\nonumber\\&+xC(x)+\frac{x^{2}C(x)}{(1-xC(x))^{2}-x},
\end{align*} 
which, by the fact that $(1-xC(x))^{2}-x=1-x(2+C(x))$, implies
\begin{align}
&G(x)\nonumber\\
&=\frac{\left(-\sum_{i=0}^{s-2}C_{i}\sum_{n=i}^{s-2}(n-s+2)C_{n-i}x^{n+1}+C(x)\sum_{n=0}^{s-2}(n+1-s)C_{n}x^{n+1}\right)(1-xC(x))}{1-x(2+C(x))\nonumber}\\
&+\frac{xC(x)}{1-x(2+C(x))}+\frac{x^{2}C(x)}{\left(1-x(2+C(x))\right)^{2}}\label{eq;iou}.
\end{align}
Making the dependency on $s$ explicit, we set $G_s(x)=G(x)$ and define $\mathcal{G}(x,y)=\sum_{s\geq1}G_s(x)y^s$. Multiplying both sides of \eqref{eq;iou} by $y^s$ and summing over $s\geq1$, we obtain
\begin{align*}
\mathcal{G}(x,y)&=\frac{\left(\frac{xy^3}{(1-y)^2}(C(xy))^2-\frac{xy^2}{(1-y)^2}C(x)C(xy)\right)(1-xC(x))+\frac{xy}{1-y}C(x)}{1-x(2+C(x))}\\
&+\frac{x^2yC(x)}{(1-y)(1-x(2+C(x)))^2}.
\end{align*}
After some algebra, we obtain 
\begin{equation}\label{eq;iio}
\mathcal{G}(x,y)=-\frac{y\left(1+\frac{1}{\sqrt{1-4x}}\right)\sqrt{1-4xy}}{4x(1-y)^2}
-\frac{y(2xy+2x-1)}{4x(1-y)^2\sqrt{1-4x}}
-\frac{y(2xy+2x-1)}{4x(1-y)^2(1-4x)}.
\end{equation}
We have $\sqrt{1-4xy}=1-\sum_{s\geq0}2C_sx^{s+1}y^{s+1}$. Hence, by comparing the coefficient of $y^s$ on both sides of \eqref{eq;iio}, we obtain
\begin{align*}
G_s(x)&=-\frac{s\left(1+\frac{1}{\sqrt{1-4x}}\right)}{4x}
+\frac{1}{4}\left(1+\frac{1}{\sqrt{1-4x}}\right)\sum_{i=1}^{s-1}2iC_{s-1-i}x^{s-1-i}\\
&-\frac{s-1}{2\sqrt{1-4x}}
-\frac{s(2x-1)}{4x\sqrt{1-4x}}-\frac{s-1}{2(1-4x)}
-\frac{s(2x-1)}{4x(1-4x)}.
\end{align*}
We have $\frac{1}{\sqrt{1-4x}}=\sum_{n\geq0}\binom{2n}{n}x^n$. Thus, by finding the coefficient of $x^n$, the proof is complete.
\end{proof}

\begin{table}[H]
\begin{center}
{\renewcommand{\arraystretch}{1.3}
\begin{tabular}{|| c||c|c ||}
 \hline
   $s$& $g_{n}$ & OEIS \\ [0.5ex]
 \hline\hline
 1   &$\frac{1}{2}\left(4^n-\binom{2n}{n}\right)$ &\seqnum{A000346} \\ \hline
 2    &$\frac{1}{2}\left(4^n-2\binom{2n}{n}\right)$ &- \\ \hline
 3    &$\frac{1}{2}\left(\binom{2n-2}{n-1}-3\binom{2n}{n}+4^{n}\right)$ &- \\ \hline
\end{tabular}
\caption{The total number $g_{n}$ of $1\times s$ rectangles over all Catalan words of length $n$, for $s=1,2,$ and $3$.}\label{tablea2}}
\end{center}
\end{table}

\begin{remark}
The total number $g_{n}$ of $1\times s$ rectangles over all Catalan words of length $n$, for several small values of $s$ is given Table \ref{tablea2}. Notice that, for $s=1$, \eqref{eq;pp1} is exactly the total area over all Catalan words of length $n$ (see \cite[Corollary 4.4.]{6}).
\end{remark}

\subsection{\texorpdfstring{$r\times s$}{} rectangles, where \texorpdfstring{$r\geq 2$}{}}

For $i\in[n]$, we set \[\C_{n}^{(=i)}=\left\{ w_{1}\cdots w_{n}\in\C_{n}:\;w_{n}=i\right\},\] and denote by $q_{n}^{(=i)}$ the restriction of $q_{n}$ to $\C_{n}^{(=i)}$. Let $Q^{(=i)}(x,t)$ denote the corresponding generating function. By \cite[Theorem 2.2]{6}, \begin{equation}\label{7da}
\left|\C_{n}^{(=i)}\right|=\frac{i}{2n-i}\binom{2n-i}{n}.\end{equation}
It is well known (e.g., \cite[(9)]{6}) that, for every $i\in[n]$,
\begin{equation}\label{8da}
\sum_{k=i}^{n}\frac{k}{2n-k}\binom{2n-k}{n}=\frac{i+1}{2n+1-i}\binom{2n+1-i}{n+1}.   
\end{equation}
We shall also need the following combinatorial identity, for which we could not find a reference. The corresponding integer sequence is registered as \seqnum{A000245} in \cite{SL}.
\begin{lemma}\label{lem;7721}
We have \[\sum_{k=1}^{n}\frac{k^{2}}{2n-k}\binom{2n-k}{n}=C_{n+1}-C_n.\]   
\end{lemma}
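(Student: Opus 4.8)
The plan is to prove the identity
\[
\sum_{k=1}^{n}\frac{k^{2}}{2n-k}\binom{2n-k}{n}=C_{n+1}-C_n
\]
by reducing it, via summation by parts (Abel summation), to the two identities \eqref{7da} and \eqref{8da} already available in the excerpt, together with the recurrence \eqref{eq;733} for the Catalan numbers. Write $b_k=\frac{k}{2n-k}\binom{2n-k}{n}$, so that $|\C_n^{(=k)}|=b_k$ by \eqref{7da} and the target sum is $\sum_{k=1}^n k\,b_k$. Using Abel summation with $S_k:=\sum_{j=k}^n b_j$, we get
\[
\sum_{k=1}^n k\,b_k=\sum_{k=1}^n S_k,
\]
since $k\,b_k$ can be rewritten as a telescoping combination of the $S_k$. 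Now \eqref{8da} with $i=k$ gives a closed form for $S_k$, namely $S_k=\frac{k+1}{2n+1-k}\binom{2n+1-k}{n+1}$, so the problem collapses to evaluating $\sum_{k=1}^n \frac{k+1}{2n+1-k}\binom{2n+1-k}{n+1}$.

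The next step is to recognize this last sum as (essentially) the number of Catalan words of length $n+1$ with specified last letter, shifted. Indeed, by \eqref{7da} applied with $n+1$ in place of $n$, the quantity $\frac{k+1}{2(n+1)-(k+1)}\binom{2(n+1)-(k+1)}{n+1}=\frac{k+1}{2n+1-k}\binom{2n+1-k}{n+1}$ equals $|\C_{n+1}^{(=k+1)}|$. Hence
\[
\sum_{k=1}^n \frac{k+1}{2n+1-k}\binom{2n+1-k}{n+1}=\sum_{k=1}^n\bigl|\C_{n+1}^{(=k+1)}\bigr|=\sum_{j=2}^{n+1}\bigl|\C_{n+1}^{(=j)}\bigr|=C_{n+1}-\bigl|\C_{n+1}^{(=1)}\bigr|.
\]
Finally $|\C_{n+1}^{(=1)}|$ is computed from \eqref{7da} (with parameter $n+1$, $i=1$) to be $\frac{1}{2n+1}\binom{2n+1}{n+1}=\frac{1}{n+1}\binom{2n}{n}=C_n$, giving $C_{n+1}-C_n$ as desired. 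This also matches the OEIS identification with \seqnum{A000245}, the sequence $C_{n+1}-C_n$.

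An alternative, purely generating-function route would be to start from the identity $\sum_{k\ge 1}\frac{k}{2n-k}\binom{2n-k}{n}x^{\text{something}}$-type manipulations, but that seems heavier; the cleanest argument is the combinatorial/Abel one above, because it reuses exactly the lemmas the paper has already set up. The main obstacle I anticipate is getting the Abel-summation bookkeeping exactly right — in particular verifying the telescoping step $\sum_{k=1}^n k b_k=\sum_{k=1}^n S_k$ and being careful with the index shift $j=k+1$ when invoking \eqref{7da} at level $n+1$ (one must check the edge case $k=n$, where $\binom{2n+1-k}{n+1}=\binom{n+1}{n+1}=1$, and the case $j=1$ separately, since \eqref{8da} is only asserted for $i\in[n]$). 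Once those boundary cases are checked, the identity follows immediately from \eqref{7da}, \eqref{8da}, and the defining recurrence of the Catalan numbers.
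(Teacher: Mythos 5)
Your proof is correct and follows essentially the same route as the paper: both arguments interchange the order of summation (your Abel-summation step $\sum_k k\,b_k=\sum_k S_k$ is exactly the paper's rewriting of $k^2$ as $\sum_{j=1}^k k$ followed by swapping sums), then apply \eqref{8da} to evaluate the inner sum. The only cosmetic difference is the final identification of $\sum_{j=2}^{n+1}\frac{j}{2(n+1)-j}\binom{2(n+1)-j}{n+1}$ as $C_{n+1}-C_n$, which you do combinatorially via \eqref{7da} and the partition of $\C_{n+1}$ by last letter, while the paper applies \eqref{8da} a second time; both are sound.
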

\begin{proof}
We have 
\begin{align*}
\sum_{k=1}^{n}\frac{k^{2}}{2n-k}\binom{2n-k}{n}	&=\sum_{k=1}^{n}\sum_{j=1}^{k}\frac{k}{2n-k}\binom{2n-k}{n}\\
&=\sum_{j=1}^{n}\sum_{k=j}^{n}\frac{k}{2n-k}\binom{2n-k}{n}\\
&=\sum_{j=1}^{n}\frac{j+1}{2n+1-j}\binom{2n+1-j}{n+1}\\
&=\sum_{j=2}^{n+1}\frac{j}{2(n+1)-j}\binom{2(n+1)-j}{n+1}\\
&=\sum_{j=1}^{n+1}\frac{j}{2(n+1)-j}\binom{2(n+1)-j}{n+1}-\frac{1}{2(n+1)-1}\binom{2(n+1)-1}{n+1}\\
&=C_{n+1}-C_{n},    
\end{align*} where in the third and last equalities we used \eqref{8da}.
\end{proof}

We shall also need \[\C_{n}^{(i),\geq r}=
\begin{cases}
\left\{ w_{1}\cdots w_{n}\in\C_{n}:\;w_{i}=r-1\text{ and }w_{j}\geq r\text{, for every \ensuremath{i+1\leq j\leq n}}\right\} & \textnormal{if } i<n\\
\left\{ w_{1}\cdots w_{n}\in\C_{n}:\;w_{n}\leq r-1\right\}& \textnormal{if } i=n.
\end{cases}\]
Let $q_{n}^{(i),\geq r}$ denote the restriction of $q_{n}$ to $\C_{n}^{(i),\geq r}$ and let $Q^{(i),\geq r}(x,t)$ be the corresponding generating function.

Additionally, we shall make use of the Chebyshev polynomials of the second kind, denoted by $U_n(x)$. These satisfy (e.g., \cite[(1.6a) and (1.6b)]{HM}): \[U_0(x)=1,\;  U_1(x)=2x,\; \textnormal{ and } U_n(x) = 2xU_{n-1}(x)-U_{n-2}(x) \textnormal{ for every } n\geq 2.\]
For $n\geq 1$, we set $Z_n(x)=\sqrt{x^{n-1}}U_{n-1}\left(\frac{1}{2\sqrt{x}}\right)$. Thus, 
\begin{equation}\label{eq;eq;zxc}
Z_1(x)=1,\;  Z_2(x)=1,\; \textnormal{ and } Z_n(x) = Z_{n-1}(x)-xZ_{n-2}(x) \textnormal{ for every } n\geq 3.
\end{equation} It will be convenient to set $U_{-1}(x) =Z_0(x)=0$.

In the proof of Theorem \ref{7daad} we shall encounter a certain linear system of equations. In the following lemma we establish its unique solution.

\begin{lemma}\label{lem;ha1}
Let $i,j\in[r-1]$ and let $x,y$ be two indeterminates. Let $M$ be the square matrix of size $r-1$ given by
\[(M)_{ij}  = 
\begin{cases}
1-x&\textnormal{if } j<r-1 \textnormal{ and } i = j\\    
-x&\textnormal{if } j < r-1 \textnormal{ and } j>i \textnormal{ or } j = i-1 \\
-xy&\textnormal{if } j=r-1 \textnormal{ and } i<r-1 \\
1-xy&\textnormal{if } j=r-1 \textnormal{ and }i = r-1\\
0&\textnormal{otherwise}.
\end{cases}\]
Thus,
\[M =
\begin{pmatrix}1-x & -x & \cdots & -x & -xy\\
-x & 1-x & \ddots & -x & -xy\\
0 & \ddots & \ddots & \ddots & \vdots\\
\vdots & \ddots & -x & 1-x & -xy\\
0 & \cdots & 0 & -x & 1-xy
\end{pmatrix}.\]
Let $\beta=(\beta_1,\ldots,\beta_{r-1})^T\in\R^{r-1}$. Then the unique solution $c=(c_1,\ldots,c_{r-1})^T\in\R^{r-1}$ of the equation $Mc=\beta$ is given by 
\[c_i=\begin{cases}
\frac{Z_{i+1}(x)}{Z_{i+2}(x)}\gamma_{i}+\sum_{j=i+1}^{r-2}\frac{xZ_{i}(x)}{Z_{j+2}(x)}\gamma_{j}+\frac{xyZ_{i}(x)}{Z_{r}(x)-xyZ_{r-1}(x)}\gamma_{r-1} & \textnormal{if }i<r-1\\
\frac{Z_{r}(x)}{Z_{r}(x)-xyZ_{r-1}(x)}\gamma_{r-1} & \text{if }i=r-1,
\end{cases}\] where 
\begin{align}
\gamma_{i}&=\frac{x^{i}}{Z_{i+1}(x)}\sum_{j=1}^{i}\frac{Z_{j+1}(x)}{x^{j}}\beta_{j}.\nonumber
\end{align}
\end{lemma}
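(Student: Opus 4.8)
The plan is to solve the linear system $Mc=\beta$ directly by exploiting the almost-bidiagonal structure of $M$: apart from the last column, $M$ is lower Hessenberg (in fact its only nonzero entries in columns $j<r-1$ are on the diagonal, the immediate subdiagonal, and all entries \emph{above} the diagonal in that column). First I would separate the role of the last column. Write $c=(c_1,\dots,c_{r-1})^T$ and isolate $c_{r-1}$: the first $r-2$ equations read, for $1\le i\le r-2$,
\[
(1-x)c_i - x\sum_{\substack{j<r-1\\ j>i}}c_j - x c_{i-1} - xy\,c_{r-1} = \beta_i,
\]
(with $c_0:=0$), and the last equation reads $-x c_{r-2} + (1-xy)c_{r-1} = \beta_{r-1}$. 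The idea is to first pretend $c_{r-1}$ is a known parameter, solve the resulting $(r-2)\times(r-2)$ \emph{purely} Hessenberg system for $c_1,\dots,c_{r-2}$ in terms of $\beta_1,\dots,\beta_{r-2}$ and $c_{r-1}$, and only at the end pin down $c_{r-1}$ from the last equation.

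The combinatorial heart is the Hessenberg solve, and here the key trick is differencing consecutive equations to kill the long tail $\sum_{j>i}c_j$. Subtracting equation $i+1$ from equation $i$ (for the homogeneous-in-$c_{r-1}$ part) turns the dense system into a genuine three-term recurrence: one gets something of the shape $(1-x)c_i - xc_{i-1} - (c_{i+1}-(1-x)c_{i+1}+\dots)$, which after bookkeeping collapses to a relation linking $c_{i-1},c_i,c_{i+1}$ with the $Z_n(x)$ recurrence $Z_{n+1}=Z_n-xZ_{n-1}$ as its characteristic recurrence — this is exactly why the $Z_n$ appear. The quantities $\gamma_i=\frac{x^i}{Z_{i+1}(x)}\sum_{j=1}^i \frac{Z_{j+1}(x)}{x^j}\beta_j$ are precisely the "summed" right-hand side produced by unrolling this recurrence with the appropriate integrating factor; I would verify the closed form for $c_i$ ($i<r-1$) by plugging it back into the differenced recurrence and using the $Z$-recurrence together with the telescoping identity $Z_{i}(x)Z_{i+2}(x)-Z_{i+1}(x)^2$ type Wronskian relations for Chebyshev-like sequences. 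The last column contributes the terms $\frac{xyZ_i(x)}{Z_r(x)-xyZ_{r-1}(x)}\gamma_{r-1}$: once $c_1,\dots,c_{r-2}$ are known in terms of $c_{r-1}$, the final equation $-xc_{r-2}+(1-xy)c_{r-1}=\beta_{r-1}$ becomes one scalar equation whose solution is $c_{r-1}=\frac{Z_r(x)}{Z_r(x)-xyZ_{r-1}(x)}\gamma_{r-1}$, giving the stated denominator $Z_r(x)-xyZ_{r-1}(x)$; substituting back yields the displayed formula for $c_i$, $i<r-1$.

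Rather than guessing-and-checking, a cleaner route — and the one I would actually write up — is pure verification: take the claimed $c$, substitute into $Mc$, and check row by row that $Mc=\beta$. For row $i<r-1$ this amounts to proving the identity
\[
(1-x)c_i - x\!\!\sum_{i<j\le r-2}\!\! c_j - xc_{i-1} - xy\,c_{r-1} = \beta_i,
\]
and for row $r-1$ the scalar identity above. Because each $c_j$ is an explicit linear combination of $\gamma_1,\dots,\gamma_{r-1}$, the whole check reduces to verifying, for each fixed index $m$, that the coefficient of $\gamma_m$ on the left equals its coefficient on the right (which is $\beta_i=\frac{Z_{i+1}}{x^i}\gamma_i-\frac{Z_i}{x^{i-1}}\gamma_{i-1}$ after inverting the definition of $\gamma_i$, so only $\gamma_i$ and $\gamma_{i-1}$ survive on the right). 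Those coefficient identities are finite identities in the $Z_n(x)$, provable from the defining recurrence \eqref{eq;eq;zxc} and a discrete summation by parts; the sum $\sum_{i<j\le r-2}$ telescopes once one writes $\frac{x Z_i}{Z_{j+2}} - \frac{x Z_i}{Z_{j+1}}$-type pieces using $Z_{j+2}-Z_{j+1}=-xZ_j$.

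The main obstacle I anticipate is purely bookkeeping: correctly tracking the piecewise definition (the $i=r-1$ case, the boundary term at $j=r-1$ with the $xy$ factor, and the $\gamma_i$ with its $Z_{i+1}$ in the denominator) so that the telescoping sums close up exactly, and making sure the $Z$-Wronskian identities one needs (e.g. $Z_{n+1}Z_{n-1}-Z_n^2 = (-x)^{?}\cdot\text{const}$, or rather the relevant $Z_aZ_b-Z_cZ_d$ combinations) are invoked in the right form. There is no conceptual difficulty — uniqueness of the solution is immediate once existence is checked, since $M$ is square and the formula exhibits a solution, so $\det M\ne 0$ generically and the solution is unique — but the algebra is delicate enough that I would organize it as "verify coefficient of $\gamma_m$" to keep it honest.
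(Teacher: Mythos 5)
Your approach is sound and would yield a complete proof, but it is organized differently from the paper's. The paper proves the lemma by exhibiting an explicit $LU$ factorization of $M$ (with $L$ unit lower bidiagonal, subdiagonal entries $-xZ_i(x)/Z_{i+1}(x)$, and $U$ upper triangular carrying the dense tail and the $y$-column), then writing down $L^{-1}$ and $U^{-1}$ in closed form and computing $c=U^{-1}(L^{-1}\beta)$; indeed the $\gamma_i$ in the statement are exactly the entries of $L^{-1}\beta$ and the displayed $c_i$ is exactly $U^{-1}\gamma$, so the lemma is literally structured around that factorization. Your two routes --- differencing consecutive rows to collapse the dense tail into a three-term recurrence governed by $Z_{n}=Z_{n-1}-xZ_{n-2}$, or direct substitution of the claimed $c$ into $Mc=\beta$ checked coefficient-by-coefficient in the $\gamma_m$ --- reach the same identities from the other end. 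What the paper's route buys is economy and an explanation of where the $\gamma_i$ come from (forward substitution); what yours buys is that the verification is self-contained and does not require one to guess the triangular factors, and the differencing version actually explains how one would \emph{discover} the formula. Two small things to fix in a write-up: inverting the definition of $\gamma_i$ gives $\beta_i=\gamma_i-\frac{xZ_i(x)}{Z_{i+1}(x)}\gamma_{i-1}$, not the expression $\frac{Z_{i+1}(x)}{x^i}\gamma_i-\frac{Z_i(x)}{x^{i-1}}\gamma_{i-1}$ you wrote (that equals $\frac{Z_{i+1}(x)}{x^i}\beta_i$); and for uniqueness, "a solution exists, so $\det M\neq0$" is not a valid inference --- the correct statement is that your formula solves $Mc=\beta$ for \emph{every} $\beta$ and is linear in $\beta$, hence defines a right inverse of the square matrix $M$ over the field of rational functions in $x,y$, so $M$ is invertible and the solution is unique. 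The Wronskian-type identity you will need in the telescoping is $Z_{n}(x)^2-Z_{n-1}(x)Z_{n+1}(x)=x^{n-1}$, which the paper also uses (in the simplification of $B_r$ in Theorem \ref{7daad}).
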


\begin{proof}
It is easy to verify that the $LU$ decomposition of $M$ is given by  
\begin{align}
(L)_{ij}&=\begin{cases} 
1 &\textnormal{if } i = j\\ -\frac{xZ_{i}(x)}{Z_{i+1}(x)}&\textnormal{if } i = j+1\\
0 &\textnormal{otherwise},\\    
\end{cases}\nonumber\\
(U)_{ij}&=\begin{cases} 
\frac{Z_{i+2}(x)}{Z_{i+1}(x)}&\textnormal{if } i< r-1 \textnormal{ and } j = i\\
-\frac{xZ_{i}(x)}{Z_{i+1}(x)}&\textnormal{if } i< r-1 \textnormal{ and } i< j < r-1\\
-\frac{xyZ_{i}(x)}{Z_{i+1}(x)}&\textnormal{if } i< r-1 \textnormal{ and } j=r-1\\
1-\frac{xyZ_{r-1}(x)}{Z_{r}(x)}&\textnormal{if } i= r-1 \textnormal{ and } j=i\\
0 &\textnormal{otherwise}.\\    
\end{cases}\nonumber
\end{align} Inverting $L$ and $U$, we obtain 
\begin{align}
(L^{-1})_{ij}&=\begin{cases} 
\frac{x^iZ_{j+1}(x)}{x^jZ_{i+1}(x)}&\textnormal{if } i \geq j\\
0 &\textnormal{otherwise},\\    
\end{cases}\nonumber\\
(U^{-1})_{ij}&=\begin{cases} 
\frac{Z_{i+1}(x)}{Z_{i+2}(x)}&\textnormal{if } i< r-1 \textnormal{ and } j = i\\
\frac{xZ_{i}(x)}{Z_{j+2}(x)}&\textnormal{if } i< r-1 \textnormal{ and } i< j < r-1\\
\frac{xyZ_{i}(x)}{Z_{r}(x)-xyZ_{r-1}(x)}&\textnormal{if } i< r-1 \textnormal{ and } j=r-1\\
\frac{Z_{r}(x)}{Z_{r}(x)-xyZ_{r-1}(x)}&\textnormal{if } i= r-1 \textnormal{ and } j=i\\
0 &\textnormal{otherwise}.\\    
\end{cases}\nonumber
\end{align} Evaluating $c=U^{-1}(L^{-1}\beta)$, the assertion follows.
\end{proof}

\begin{theorem}\label{7daad}
We have
\begin{equation}\label{eq;842p}
Q(x,t) = \frac{Z_{r-1}(x)-xZ_{r-2}(x)P(x,t)}{Z_{r}(x)-xZ_{r-1}(x)P(x,t)}.
\end{equation}
\end{theorem}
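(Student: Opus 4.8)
The plan is to express $Q$ in terms of the generating functions $Q^{(=i)}(x,t)$, $i\in[r-1]$, and then to recognise the relations among these as the linear system solved in Lemma~\ref{lem;ha1}. The combinatorial engine is a single observation: since $r\ge 2$, an $s$-window of columns that contains a column of height at most $r-1$ carries no $r\times s$ rectangle; and a maximal run of consecutive columns all of height at least $r$ --- which, by the constraint $w_{j+1}\le w_j+1$, must begin immediately after a column of height exactly $r-1$ --- contributes, once its heights are lowered by $r-1$, precisely as many $r\times s$ rectangles as it then has $1\times s$ rectangles. Hence every such ``high block'' is an independent \emph{nonempty} Catalan word, weighted by its number of $1\times s$ rectangles, and so contributes the factor $P(x,t)-1$; and the number of $r\times s$ rectangles in a Catalan word is the sum of the numbers of $1\times s$ rectangles in its lowered high blocks.

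From this I would extract two identities. Splitting an arbitrary Catalan word at the position of its last column of height at most $r-1$ (that column necessarily has height $r-1$ unless it is the final column) yields
\[
Q(x,t)=1+\sum_{h=1}^{r-2}Q^{(=h)}(x,t)+P(x,t)\,Q^{(=r-1)}(x,t).
\]
Deleting the last column of a Catalan word that ends at a value $h\le r-1$ (this deletes no rectangle) and distinguishing whether the remaining word ends at a value in $\{\max\{1,h-1\},\dots,r-1\}$ or ends with a high block resting on a column of height $r-1$ yields, for every $h\in[r-1]$,
\[
Q^{(=h)}(x,t)=x\,[h=1]+x\sum_{h'=\max\{1,h-1\}}^{r-1}Q^{(=h')}(x,t)+x\bigl(P(x,t)-1\bigr)Q^{(=r-1)}(x,t).
\]

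Rewriting the last display as $Mc=\beta$ with $c=(Q^{(=1)},\dots,Q^{(=r-1)})^T$, one checks that $M$ is precisely the matrix of Lemma~\ref{lem;ha1} with the indeterminate $y$ specialised to $P(x,t)$, and that $\beta=(x,0,\dots,0)^T$: the diagonal entry $1-x$ (and $1-xy$ in the last row) is the self-term, the subdiagonal $-x$ is the $h'=h-1$ term, the entries $-x$ strictly above the diagonal are the terms $h'=h+1,\dots,r-2$, and the last column $-xy$ absorbs both the $h'=r-1$ term and the high-block term. Lemma~\ref{lem;ha1}, applied with $\beta_1=x$, $\beta_j=0$ for $j\ge 2$, and using $Z_2(x)=1$, then gives $\gamma_i=x^i/Z_{i+1}(x)$ for all $i$; in particular
\[
Q^{(=r-1)}(x,t)=\frac{x^{r-1}}{Z_r(x)-xZ_{r-1}(x)P(x,t)},
\]
with analogous closed forms for the other $Q^{(=i)}$.

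It remains to substitute back into the first displayed identity. The shortest route is to observe that the case $h=1$ of the recursion, combined with that identity, already forces $Q^{(=1)}(x,t)=xQ(x,t)$, so that $Q=c_1/x$; inserting the value of $c_1$ from Lemma~\ref{lem;ha1} and collapsing the resulting finite sums by repeated use of $Z_n(x)=Z_{n-1}(x)-xZ_{n-2}(x)$ produces
\[
Q(x,t)=\frac{Z_{r-1}(x)-xZ_{r-2}(x)P(x,t)}{Z_r(x)-xZ_{r-1}(x)P(x,t)},
\]
as claimed. The step I expect to be the main obstacle is pinning down the recursion for $Q^{(=h)}$ exactly --- the lower summation limit $\max\{1,h-1\}$, and the verification that every window straddling a block boundary or meeting a column of height at most $r-1$ contributes nothing --- since once that is in hand, identifying the system with $M$ and carrying out the telescoping are routine. (One could instead bypass the linear system: the same block analysis applied to a ``first return to level $1$'' decomposition gives $Q_r=1+xQ_{r-1}Q_r$, where $Q_m$ denotes the analogous generating function counting $m\times s$ rectangles and $Q_1=P$, whence the formula follows by induction on $r$ via $Z_n=Z_{n-1}-xZ_{n-2}$; but the argument through Lemma~\ref{lem;ha1} is the one the preceding setup is tailored to.)
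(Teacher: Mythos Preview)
Your argument is correct and follows the paper's core strategy---reduce to the linear system of Lemma~\ref{lem;ha1} with $y=P(x,t)$ and $\beta=(x,0,\dots,0)^T$---but it is considerably more economical at both ends. The paper reaches $\beta$ only after passing through the auxiliary sets $\C_n^{(i),\geq r}$, the identities \eqref{7da}--\eqref{8da}, and Lemma~\ref{lem;7721}; your ``delete the last column'' recursion produces $\beta$ in one stroke. For the endgame, the paper substitutes every $Q^{(=i)}$ into \eqref{eq;h81} and then proves three separate inductive identities to collapse the resulting expressions $A_r$ and $B_r$; your observation $Q^{(=1)}=xQ$ lets you read $Q$ off from $c_1$ alone, and the telescoping needs only the single Cassini-type identity $Z_{j+1}^2-Z_jZ_{j+2}=x^j$ (which is an immediate consequence of \eqref{eq;eq;zxc}). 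Your parenthetical alternative---the first-return recursion $Q_r=1+xQ_{r-1}Q_r$, giving $Q_r=(Z_{r-1}-xZ_{r-2}P)/(Z_r-xZ_{r-1}P)$ by a two-line induction on $r$---is a genuinely different and much shorter proof that the paper does not pursue.
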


\begin{proof}
Assume that $n\geq r$. We claim that the following relations hold:
\begin{equation} \label{eq;74g}
q_{n}^{(i),\geq r}=
\begin{cases}
q_{i}^{(=r-1)}p_{n-i}&\textnormal{if } r-1\leq i\leq n-1\\
\sum_{j=1}^{r-1}q_{n}^{(=j)}& \textnormal{if } i= n.
\end{cases}
\end{equation}  Furthermore,
\begin{equation}\label{eq;kht}
q_{n}^{(=i)}=
\begin{cases}
\sum_{k=1}^{r-1}q_{n-1}^{(=k)}+\sum_{k=r-1}^{n-2}q_{n-1}^{(k),\geq r}&\textnormal{if } i=1\\
\sum_{k=i-1}^{r-1}q_{n-1}^{(=k)}+\sum_{k=r-1}^{n-2}q_{n-1}^{(k),\geq r}&\textnormal{if } 2\leq i\leq r-1.
\end{cases}
\end{equation}
We have
\begin{align}
q_{n}&=\sum_{i=r-1}^{n}q_{n}^{(i),\geq r}\nonumber\\&=\sum_{i=r-1}^{n-1}q_{i}^{(=r-1)}p_{n-i}+\sum_{i=1}^{r-1}q_{n}^{(=i)}    \nonumber\\
&=\sum_{i=r-1}^{n-1}q_{i}^{(=r-1)}p_{n-i}+\sum_{k=1}^{r-2}(k+1)q_{n-1}^{(=k)}+(r-1)\sum_{i=r-1}^{n-1}q_{i}^{(=r-1)}p_{n-1-i}, \label{eq;8ga}
\end{align}
where we substituted $q_{n}^{(i),\geq r}$ with \eqref{eq;74g} and $q_{n}^{(=i)}$ with \eqref{eq;kht}. Multiplying both sides of \eqref{eq;8ga} by $x^n$, summing over $n\geq r$ and adding $\sum_{n=0}^{r-1}C_{n}x^n$ to both sides, with some algebra and \eqref{7da} we obtain \begin{align}
Q(x,t)&=\sum_{n=0}^{r-1}C_{n}x^{n}+Q^{(=r-1)}(x,t)(P(x,t)((r-1)x+1)-1)\nonumber\\&+x\sum_{k=1}^{r-2}(k+1)\left(Q^{(=k)}(x,t)-\sum_{n=k}^{r-2}\frac{k}{2n-k}\binom{2n-k}{n}x^{n}\right).  \label{eq;dba}
\end{align} 
Considering some of the terms on the right-hand side of \eqref{eq;dba}, we notice that, for $r\geq 3$, we have
\begin{align}
&\sum_{n=0}^{r-1}C_{n}x^{n}-\sum_{k=1}^{r-2}\sum_{n=k}^{r-2}\frac{k^{2}}{2n-k}\binom{2n-k}{n}x^{n+1}-\sum_{k=1}^{r-2}\sum_{n=k}^{r-2}\frac{k}{2n-k}\binom{2n-k}{n}x^{n+1}\nonumber\\&=\sum_{n=0}^{r-1}C_{n}x^{n}-\sum_{n=1}^{r-2}\overbrace{\left(\sum_{k=1}^{n}\frac{k^{2}}{2n-k}\binom{2n-k}{n}\right)}^{=C_{n+1}-C_{n}, \textnormal{ by Lemma \ref{lem;7721}}}x^{n+1}-\sum_{n=1}^{r-2}\overbrace{\left(\sum_{k=1}^{n}\frac{k}{2n-k}\binom{2n-k}{n}\right)}^{=C_n, \textnormal{ by \eqref{8da}}}x^{n+1}    \nonumber\\
&=\sum_{n=0}^{r-1}C_{n}x^{n}-\sum_{n=1}^{r-2}(C_{n+1}-C_{n})x^{n+1}-\sum_{n=1}^{r-2}C_{n}x^{n+1}\nonumber\\&=1+x.\nonumber
\end{align}
Thus, for $r\geq 2$, we have \begin{equation}\label{eq;h81}
Q(x,t)=1+x+Q^{(=r-1)}(x,t)(P(x,t)((r-1)x+1)-1)+x\sum_{k=1}^{r-2}(k+1)Q^{(=k)}(x,t).\end{equation} Now, substituting \eqref{eq;74g} in  \eqref{eq;kht}, for $i\in[r-1]$, we obtain
\begin{equation}\label{eq;6jc}
q_{n}^{(=i)}=\begin{cases}
\sum_{k=1}^{r-1}q_{n-1}^{(=k)}+\sum_{k=r-1}^{n-2}q_{k}^{(=r-1)}p_{n-1-k} & \textnormal{if } i=1\\
\sum_{k=i-1}^{r-1}q_{n-1}^{(=k)}+\sum_{k=r-1}^{n-2}q_{k}^{(=r-1)}p_{n-1-k} & \textnormal{if } 2\leq i\leq r-1.
\end{cases}
\end{equation}
Multiplying both sides of \eqref{eq;6jc} by $x^n$, summing over $n\geq r+1$ and adding $\sum_{n=i}^{r}q_{n}^{(=i)}x^{n}$ to both sides, with some algebra we obtain
\begin{align}
&Q^{(=1)}(x,t)=\nonumber\\
&
\sum_{n=1}^{r}q_{n}^{(=1)}x^{n}+x\sum_{n=1}^{r-1}Q^{(=n)}(x,t)-x\sum_{k=1}^{r-1}\sum_{n=k}^{r-1}q_{n}^{(=k)}x^{n}+xQ^{(=r-1)}(x,t)(P(x,t)-1) \label{eq;kk1}
\end{align}
and, for $2\leq i\leq r-1$,
\begin{align}
&Q^{(=i)}(x,t)=\nonumber\\
&
\sum_{n=i}^{r}q_{n}^{(=i)}x^{n}+x\sum_{n=i-1}^{r-1}Q^{(=n)}(x,t)-x\sum_{k=i-1}^{r-1}\sum_{n=k}^{r-1}q_{n}^{(=k)}x^{n}+xQ^{(=r-1)}(x,t)(P(x,t)-1) \label{eq;kk2}
\end{align}
Set 
\[
\beta_i=\begin{cases}
\sum_{n=1}^{r}q_{n}^{(=1)}x^{n}-x\sum_{k=1}^{r-1}\sum_{n=k}^{r-1}q_{n}^{(=k)}x^{n}&\textnormal{if } i =1\\ \sum_{n=i}^{r}q_{n}^{(=i)}x^{n}-x\sum_{k=i-1}^{r-1}\sum_{n=k}^{r-1}q_{n}^{(=k)}x^{n}&  \textnormal{if } 2\leq i \leq r-1.
\end{cases}\]
Using \eqref{7da} and \eqref{8da}, we have
\begin{align}
\beta_i&=\begin{cases}
\sum_{n=1}^{r}\frac{1}{2n-1}\binom{2n-1}{n}x^{n}-x\sum_{k=1}^{r-1}\sum_{n=k}^{r-1}\frac{k}{2n-k}\binom{2n-k}{n}x^{n} & \textnormal{if }i=1\\
\sum_{n=i}^{r}\frac{i}{2n-i}\binom{2n-i}{n}x^{n}-x\sum_{k=i-1}^{r-1}\sum_{n=k}^{r-1}\frac{k}{2n-k}\binom{2n-k}{n}x^{n} & \textnormal{if }2\leq i\leq r-1.
\end{cases}\nonumber\\
&=\begin{cases}
x+\sum_{n=1}^{r-1}\left(\frac{1}{2n+1}\binom{2n+1}{n+1}-\sum_{k=1}^{n}\frac{k}{2n-k}\binom{2n-k}{n}\right)x^{n+1} & \textnormal{if }i=1\\
\sum_{n=i-1}^{r-1}\left(\frac{i}{2n+2-i}\binom{2n+2-i}{n+1}-\sum_{k=i-1}^{n}\frac{k}{2n-k}\binom{2n-k}{n}\right)x^{n+1} & \textnormal{if }2\leq i\leq r-1
\end{cases}\nonumber\\
&=\begin{cases}
x & \textnormal{if }i=1\\
0 & \textnormal{if }2\leq i\leq r-1.
\end{cases}\nonumber
\end{align}
We obtain the following system of $r-1$ equations in the $r-1$ indeterminates $Q^{(=i)}(x,t), i\in[r-1]$:
\[\begin{cases}
Q^{(=1)}(x,t)-\sum_{n=1}^{r-2}xQ^{(=n)}-xP(x,t)Q^{(=r-1)}(x,t)&=\beta_{1}    \\
\vdots&\vdots\\
Q^{(=i)}(x,t)-\sum_{n=i-1}^{r-2}xQ^{(=n)}(x,t)-xP(x,t)Q^{(=r-1)}(x,t)&=\beta_{i}\\
\vdots&\vdots\\
Q^{(=r-1)}(x,t)-\sum_{n=r-2}^{r-2}xQ^{(=n)}(x,t)-xP(x,t)Q^{(=r-1)}(x,t)&=\beta_{r-1}.
\end{cases}\] By Lemma \ref{lem;ha1}, 
\[Q^{(=i)}(x,t)=\begin{cases}
\frac{Z_{i+1}(x)}{Z_{i+2}(x)}\gamma_{i}+\sum_{j=i+1}^{r-2}\frac{xZ_{i}(x)}{Z_{j+2}(x)}\gamma_{j}+\frac{xP(x,t)Z_{i}(x)}{Z_{r}(x)-xP(x,t)Z_{r-1}(x)}\gamma_{r-1} & \textnormal{if }i<r-1\\
\frac{Z_{r}(x)}{Z_{r}(x)-xP(x,t)Z_{r-1}(x)}\gamma_{r-1} & \text{if }i=r-1,
\end{cases}\] where 
\[\gamma_{i}=\frac{x^{i}}{Z_{i+1}(x)}\sum_{j=1}^{i}\frac{Z_{j+1}(x)}{x^{j}}\beta_{j}=\frac{x^{i}Z_{2}(x)}{Z_{i+1}(x)}=\frac{x^{i}}{Z_{i+1}(x)}.\] Thus, 
\[Q^{(=i)}(x,t)=\begin{cases}
\frac{x^{i}}{Z_{i+2}(x)}+\sum_{j=i+1}^{r-2}\frac{x^{j+1}Z_{i}(x)}{Z_{j+1}(x)Z_{j+2}(x)}+\frac{x^{r}P(x,t)Z_{i}(x)}{Z_{r}(x)(Z_{r}(x)-xP(x,t)Z_{r-1}(x))} & \textnormal{if }i<r-1\\
\frac{x^{r-1}}{Z_{r}(x)-xP(x,t)Z_{r-1}(x)} & \text{if }i=r-1.
\end{cases}\]
Substituting this into \eqref{eq;h81}, we distinguish between two cases: If $r=2$ then \[Q(x,t)=\frac{1}{1-xP(x,t)}=\frac{Z_{1}(x)-xZ_{0}(x)P(x,t)}{Z_{2}(x)-xZ_{1}(x)P(x,t)}\] and the assertion holds true. Thus, we assume that $r\geq 3$. We have
\begin{align}
Q(x,t)&=1+x+\frac{x^{r-1}(P(x,t)((r-1)x+1)-1)}{Z_{r}(x)-xP(x,t)Z_{r-1}(x)}+x\sum_{k=1}^{r-2}(k+1)\bigg(\frac{x^{k}}{Z_{k+2}(x)}+\nonumber\\&\hspace{3cm}\sum_{j=k+1}^{r-2}\frac{x^{j+1}Z_{k}(x)}{Z_{j+1}(x)Z_{j+2}(x)}+\frac{x^{r}P(x,t)Z_{k}(x)}{Z_{r}(x)(Z_{r}(x)-xP(x,t)Z_{r-1}(x))}\bigg)\label{eq;780}
\end{align} Let us set
\begin{align*}
A_r&=x(1+x)Z_r(x)-x^{r-1}+xZ_r(x)\sum_{k=1}^{r-2}\left(\frac{(k+1)x^k}{Z_{k+2}(x)}+\sum_{j=k+1}^{r-2}\frac{(k+1)x^{j+1}Z_k(x)}{Z_{j+1}(x)Z_{j+2}(x)}\right),\\
B_r&=(1+x)Z_{r-1}(x)-x^{r-2}((r-1)x+1)+x^r\sum_{k=1}^{r-2}\frac{(k+1)Z_k(x)}{Z_r(x)}-Z_{r-1}(x)A_r.
\end{align*} With this, we may write \eqref{eq;780} as
\begin{equation}\label{eq;mmk}
(Z_r(x)-xP(x,t)Z_{r-1}(x))Q(x,t)=A_r-xP(x,t)B_r.
\end{equation} Using the recursion for $Z_n$ stated in \eqref{eq;eq;zxc}, the following three identities are easily proved by induction on $r$:
\begin{align*}
&x^2\sum_{k=1}^r(k+1)Z_k(x)=1- (r+1)xZ_{r+2}(x)-Z_{r+3}(x),\\
&\sum_{j=k+1}^r\frac{x^{j-k-1}}{Z_{j+1}(x)Z_{j+2}(x)}=\frac{Z_{r-k}(x)}{Z_{k+2}(x)Z_{r+2}(x)},\\
&\sum_{k=1}^{r-2}\frac{(k+1)x^{k+1}(Z_r(x)+x^2Z_k(x)Z_{r-k-2}(x))}{Z_{k+2}(x)}=Z_{r-1}(x)-(1+x)Z_r(x)+x^{r-1}.
\end{align*}
Hence, $A_r$ can be simplified as
\begin{align*}
A_r&=(1+x)Z_r(x)-x^{r-1}+xZ_r(x)\sum_{k=1}^{r-2}\frac{(k+1)x^k}{Z_{k+2}(x)}
+xZ_r(x)\sum_{k=1}^{r-3}\sum_{j=k+1}^{r-2}\frac{(k+1)x^{j+1}Z_k(x)}{Z_{j+1}(x)Z_{j+2}(x)},\\
&=(1+x)Z_r(x)+(r-2)x^{r-1}+\sum_{k=1}^{r-3}\frac{(k+1)x^{k+1}(Z_r(x)+x^2Z_k(x)Z_{r-k-2}(x))}{Z_{k+2}(x)}\\
&=(1+x)Z_r(x)-x^{r-1}+\sum_{k=1}^{r-2}\frac{(k+1)x^{k+1}(Z_r(x)+x^2Z_k(x)Z_{r-k-2}(x))}{Z_{k+2}(x)}\\
&=(1+x)Z_r(x)-x^{r-1}+Z_{r-1}(x)-(1+x)Z_r(x)+x^{r-1}\\
&=Z_{r-1}(x).
\end{align*}
Similarly, $B_r(x)$ can be simplified as
\begin{align*}
B_r&=(1+x)Z_{r-1}(x)-x^{r-2}((r-1)x+1)+xZ_{r-1}(x)\sum_{k=1}^{r-2}\frac{(k+1)x^k}{Z_{k+2}(x)}\\
&+xZ_{r-1}(x)\sum_{k=1}^{r-3}(k+1)x^{k+2}Z_k(x)\sum_{j=k+1}^{r-2}\frac{x^{j-k-1}}{Z_{j+1}(x)Z_{j+2}(x)}
-\frac{x^r}{Z_r(x)}\sum_{k=1}^{r-2}(k+1)Z_k(x)\\
&=(1+x)Z_{r-1}(x)-x^{r-2}((r-1)x+1)+\frac{(r-1)x^{r-1}Z_{r-1}(x)}{Z_r(x)}
+xZ_{r-1}(x)\sum_{k=1}^{r-3}\frac{(k+1)x^k}{Z_{k+2}(x)}\\
&+xZ_{r-1}(x)\sum_{k=1}^{r-3}\frac{(k+1)x^{k+2}Z_k(x)Z_{r-k-2}(x)}{Z_{k+2}(x)Z_r(x)}-\frac{x^r}{Z_r(x)}\sum_{k=1}^{r-2}(k+1)Z_k(x)\\
&=(1+x)Z_{r-1}(x)-x^{r-2}+\frac{(r-1)x^{r-1}Z_{r-1}(x)}{Z_r(x)}\\
&+\frac{Z_{r-1}(x)}{Z_r(x)}\sum_{k=1}^{r-3}\frac{(k+1)x^{k+1}(Z_r(x)+x^2Z_k(x)Z_{r-k-2}(x))}{Z_{k+2}(x)}-\frac{x^{r-2}(1-Z_{r+1}(x))}{Z_r(x)}\\
&=(1+x)Z_{r-1}(x)-x^{r-2}+\frac{Z_{r-1}(x)(Z_{r-1}(x)-(1+x)Z_r(x)+x^{r-1})}{Z_r(x)}-\frac{x^{r-2}(1-Z_{r+1}(x))}{Z_r(x)}\\
&=\frac{Z_{r-1}^2(x)-x^{r-2}}{Z_r(x)}\\
&=\frac{Z_{r-2}Z_r(x)+x^{r-2}-x^{r-2}}{Z_r(x)}\\
&=Z_{r-2}(x).
\end{align*}
Substituting these into \eqref{eq;mmk}, we have
\begin{align*}
(Z_r(x)-xP(x,t)Z_{r-1}(x))Q(x,t)=Z_{r-1}(x)-xP(x,t)Z_{r-2}(x),
\end{align*}
which completes the proof.
\end{proof}

\begin{theorem}\label{eq;hs22}
The generating function for the total number $h_n$ of $r\times s$ rectangles over all Catalan words of length $n$ is given by 
\[\left(\frac{\partial}{\partial t}P(x,t)_{|(x,t)=(x,1)}\right)(C(x)-1)^{r-1}.\] In particular,
\begin{align}
h_n&=\sum_{k=r}^{n+1-s}\frac{r-1}{k+r-2}\binom{2k-3}{k-r}\bigg(\sum_{i=0}^{s-2}\frac{s-1-i}{i+1}\binom{2i}{i}\binom{2(n+1-k)-2i}{n+1-k-i}-\nonumber\\&\hspace{8cm} (2s-1)\binom{2(n+1-k)}{n+1-k}+4^{n+1}\bigg).\label{r22}
\end{align}
\end{theorem}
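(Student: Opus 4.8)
The plan is to extract the total number $h_n$ from the generating function $Q(x,t)$ given in Theorem \ref{7daad} by differentiating with respect to $t$ and setting $t=1$, exactly as was done for $g_n$ in Theorem \ref{eq;hs2}. First I would recall that $P(x,1)=C(x)$ by Remark \ref{rem;abcd}, and that $\frac{\partial}{\partial t}Q(x,t)_{|(x,t)=(x,1)}$ is the generating function for $h_n$. Writing $G(x)=\frac{\partial}{\partial t}P(x,t)_{|(x,t)=(x,1)}$ (the same function appearing in the proof of Theorem \ref{eq;hs2}), I would differentiate \eqref{eq;842p} with respect to $t$ using the quotient rule. Setting $N(x,t)=Z_{r-1}(x)-xZ_{r-2}(x)P(x,t)$ and $D(x,t)=Z_{r}(x)-xZ_{r-1}(x)P(x,t)$, so $Q=N/D$, we get $\partial_t Q = (\partial_t N \cdot D - N\cdot \partial_t D)/D^2$, where $\partial_t N = -xZ_{r-2}(x)G(x)$ and $\partial_t D = -xZ_{r-1}(x)G(x)$. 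Hence at $t=1$,
\[
\frac{\partial}{\partial t}Q(x,t)_{|t=1} = \frac{-xG(x)\left(Z_{r-2}(x)D(x,1)-Z_{r-1}(x)N(x,1)\right)}{D(x,1)^2}.
\]

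The main computational step is to simplify the bracket $Z_{r-2}(x)D(x,1)-Z_{r-1}(x)N(x,1)$ and the denominator $D(x,1)^2$ using the identity $xC(x)^2-C(x)+1=0$ from \eqref{eq;7333}, together with the $Z_n$ recursion \eqref{eq;eq;zxc} and the identity $Z_{r-1}^2(x)-x^{r-2}=Z_{r-2}(x)Z_r(x)$ already used at the end of the proof of Theorem \ref{7daad}. Expanding the bracket gives $Z_{r-2}(x)Z_r(x) - Z_{r-1}(x)^2 + xC(x)(Z_{r-1}(x)Z_{r-2}(x) - Z_{r-2}(x)Z_{r-1}(x)) = Z_{r-2}(x)Z_r(x)-Z_{r-1}(x)^2 = -x^{r-2}$, so the bracket collapses to $-x^{r-2}$ and the numerator becomes $x^{r-1}G(x)$. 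For the denominator, I expect $D(x,1)=Z_r(x)-xZ_{r-1}(x)C(x)$ to factor neatly; using $Z_n(x)=\sqrt{x^{n-1}}U_{n-1}(1/(2\sqrt x))$ and the known closed form of Chebyshev polynomials, one can show $Z_r(x)-xZ_{r-1}(x)C(x)$ equals $x^{(r-1)/2}$ times a value of $U$ at the argument that makes the recursion telescope — concretely I anticipate $D(x,1) = x^{r-1}(C(x)-1)^{-(r-1)}$ up to checking small cases $r=2,3$, which would give $\frac{\partial}{\partial t}Q_{|t=1} = x^{r-1}G(x)/D(x,1)^2 = G(x)(C(x)-1)^{r-1}$, matching the claimed formula $\left(\frac{\partial}{\partial t}P(x,t)_{|(x,t)=(x,1)}\right)(C(x)-1)^{r-1}$.

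For the explicit formula \eqref{r22}, I would extract coefficients from the product $G(x)(C(x)-1)^{r-1}$. The series $G(x)=\sum_{m}g_m x^m$ has its coefficients computed in Theorem \ref{eq;hs2}: $g_m$ is the bracketed expression in \eqref{eq;pp1} (with $n$ replaced by $m$). For $(C(x)-1)^{r-1}$ I would use the known expansion of powers of the Catalan generating function: $(C(x)-1)^{j} = \sum_{k\ge j}\frac{j}{2k-j}\binom{2k-j}{k}x^k$ — more precisely $(xC(x)^2)^{j}=(C(x)-1)^{j}$ by \eqref{eq;7333} has $k$th coefficient $\frac{j}{2k-j}\binom{2k-j}{k}$ — so with $j=r-1$ the coefficient of $x^k$ in $(C(x)-1)^{r-1}$ is $\frac{r-1}{2k-(r-1)}\binom{2k-(r-1)}{k}$; after reindexing ($k\mapsto k$, shifting so the binomials read $\binom{2k-3}{k-r}$ as in the statement, which corresponds to the substitution $2k-(r-1)\mapsto 2(k)-3$ with an appropriate shift) this becomes $\frac{r-1}{k+r-2}\binom{2k-3}{k-r}$. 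Convolving the two series and collecting the coefficient of $x^n$ — noting $g_m=0$ for $m<s-1$, which forces the upper limit $k\le n+1-s$ — yields \eqref{r22}.

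The hard part will be verifying that $D(x,1)=Z_r(x)-xZ_{r-1}(x)C(x)$ simplifies to exactly $x^{r-1}/(C(x)-1)^{r-1}$; this requires a clean argument (induction on $r$ via \eqref{eq;eq;zxc}, or a direct evaluation of the Chebyshev polynomial at the relevant argument), and then carefully matching the two different-looking index conventions for the coefficients of powers of $C(x)-1$ so that the final closed form matches \eqref{r22} on the nose, including the shift that turns $\binom{2k-(r-1)}{k}$ into $\binom{2k-3}{k-r}$ and the appearance of $4^{n+1}$ rather than $4^{n+1-k}$ inside the sum (which I would double-check against the $s=1$, small-$r$ cases).
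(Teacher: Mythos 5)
Your proposal follows essentially the same route as the paper: differentiate \eqref{eq;842p} with respect to $t$ at $t=1$, collapse the resulting expression to $\left(\frac{\partial}{\partial t}P(x,t)_{|(x,t)=(x,1)}\right)(C(x)-1)^{r-1}$ via identities relating the $Z_n(x)$ to $C(x)$, and then read off \eqref{r22} as a convolution with the coefficients of $(C(x)-1)^{r-1}$. One correction to the step you yourself flag as the hard part: the identity you anticipate for the denominator is misstated. In fact $Z_r(x)-xZ_{r-1}(x)C(x)=1/C(x)^{r-1}$ (prove by induction on $r$: from \eqref{eq;eq;zxc}, $D_{r+1}=D_r-xN_r$ with $N_r=C(x)D_r$, so $D_{r+1}=D_r(1-xC(x))=D_r/C(x)$, using $1-xC(x)=1/C(x)$ from \eqref{eq;7333}); hence it is $D(x,1)^2$, not $D(x,1)$, that equals $x^{r-1}/(C(x)-1)^{r-1}$, and only with this corrected form does your line $x^{r-1}G(x)/D(x,1)^2=G(x)(C(x)-1)^{r-1}$ go through. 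The paper organizes the same cancellation slightly differently, by first verifying $\frac{Z_{r-1}(x)-xZ_{r-2}(x)C(x)}{Z_{r}(x)-xZ_{r-1}(x)C(x)}=C(x)$ and $-\frac{Z_{r-2}(x)-Z_{r-1}(x)C(x)}{Z_{r}(x)-xZ_{r-1}(x)C(x)}=x^{r-2}C(x)^{2r-2}$ and substituting, which never isolates $D(x,1)$; your use of $Z_{r-2}(x)Z_r(x)-Z_{r-1}(x)^2=-x^{r-2}$ for the numerator is an equivalent and clean alternative. Your suspicion about $4^{n+1}$ versus $4^{n+1-k}$ in \eqref{r22} is well founded: the bracket is meant to be $2g_{n+1-k}$ from \eqref{eq;pp1}, consistent with a plain Cauchy product, so the exponent should indeed involve $n+1-k$.
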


\begin{proof}
Using the recursions for $C(x)$ and $Z_n(x)$ stated in \eqref{eq;7333} and \eqref{eq;eq;zxc}, respectively, it is straightforward to verify that
\begin{align}
\frac{Z_{r-1}(x)-xZ_{r-2}(x)C(x)}{Z_{r}(x)-xZ_{r-1}(x)C(x)}&=C(x),\nonumber\\
-\frac{Z_{r-2}(x)-Z_{r-1}(x)C(x)}{Z_{r}(x)-xZ_{r-1}(x)C(x)}&=x^{r-2}C(x)^{2r-2}.\nonumber
\end{align}
Differentiating \eqref{eq;842p} with respect to $t$ and substituting $t=1$, we obtain 
\begin{align}
&\frac{\partial}{\partial t}Q(x,t)_{|(x,t)=(x,1)}\nonumber\\&=x\left(\frac{\partial}{\partial t}P(x,t)_{|(x,t)=(x,1)}\right)\bigg(-\frac{Z_{r-2}(x)}{Z_{r}(x)-xZ_{r-1}(x)C(x)}+\nonumber\\&\hspace{6.5cm} \frac{Z_{r-1}(x)}{Z_{r}(x)-xZ_{r-1}(x)C(x)}\frac{Z_{r-1}(x)-xZ_{r-2}(x)C(x)}{Z_{r}(x)-xZ_{r-1}(x)C(x)}\bigg)\nonumber\\
&=x\left(\frac{\partial}{\partial t}P(x,t)_{|(x,t)=(x,1)}\right)\left(-\frac{Z_{r-2}(x)-Z_{r-1}(x)C(x)}{Z_{r}(x)-xZ_{r-1}(x)C(x)}\right)\nonumber\\
&=\left(\frac{\partial}{\partial t}P(x,t)_{|(x,t)=(x,1)}\right)x^{r-1}C(x)^{2r-2}\nonumber\\
&=\left(\frac{\partial}{\partial t}P(x,t)_{|(x,t)=(x,1)}\right)(C(x)-1)^{r-1}.\nonumber
\end{align}
Now, \[(C(x)-1)^{r-1} = 2(r-1)x^r\sum_{n\geq 0}\frac{1}{n+2(r-1)}\binom{2n+2(r-1)-1}{n}x^n.\] From this, together with \eqref{eq;pp1}, the proof is complete.
\end{proof}

\begin{example}
For $r=2$ and $s=1$,  \eqref{r22} gives us $h_n =\binom{2n+1}{n}-\binom{2n+3}{n+1}+2\cdot4^{n}$, corresponding to \seqnum{A006419} in \cite{SL}.
\end{example}

\begin{conclusion*}
In this work we found the bivariate generating function for the distribution on words (resp.\ Catalan words) of the number of $r\times s$ rectangles and the generating function for their total number over all words (resp.\ Catalan words). We do not know how to solve the corresponding distribution problem for permutations. Nevertheless, we can show that the total number of $r\times s$ rectangles over all permutations of $[n]$ is given by $(n+1)!\binom{n-r+2}{s+1}/\binom{n+1}{s}$.
\end{conclusion*}

\end{document}